\documentclass[12pt,reqno]{amsart}
\setlength{\textheight}{23cm}
\setlength{\textwidth}{16cm}
\setlength{\topmargin}{-0.8cm}
\setlength{\parskip}{0.3\baselineskip}
\hoffset=-1.4cm
\usepackage{hyperref}
\usepackage{amsthm, amsmath, amscd, amssymb,centernot}
\usepackage{amsmath}
\usepackage{amssymb}
\usepackage[all]{xy}
\usepackage{ulem}
\usepackage{xcolor}

\newtheorem{theorem}{Theorem}[section]
\newtheorem{proposition}[theorem]{Proposition}
\newtheorem{lemma}[theorem]{Lemma}
\newtheorem{corollary}[theorem]{Corollary}

\theoremstyle{definition}

\newtheorem{remark}[theorem]{Remark}
\newtheorem{example}[theorem]{Example}

\numberwithin{equation}{section}

\newcommand{\C}{\mathbb C}

\newcommand{\R}{\mathbb R}
\newcommand{\Z}{\mathbb Z}

\begin{document}

\baselineskip=15pt

\title[Seshadri constants on Bott towers]{Seshadri constants on Bott towers}

\author[I. Biswas]{Indranil Biswas}

\address{School of Mathematics, Tata Institute of Fundamental
Research, Homi Bhabha Road, Mumbai 400005, India}

\email{indranil@math.tifr.res.in}

\author[J. Dasgupta]{Jyoti Dasgupta}

\address{Indian Institute of Science Education and Research, Pune, 
Dr. Homi Bhabha Road,
Pashan, Pune 411 008, India}

\email{jdasgupta.maths@gmail.com}

\author[K. Hanumanthu]{Krishna Hanumanthu}

\address{Chennai Mathematical Institute, H1 SIPCOT IT Park, Siruseri, Kelambakkam 603103, 
India}
\email{krishna@cmi.ac.in}

\author[B. Khan]{Bivas Khan}

\address{Indian Institute of Science Education and Research, Pune, 
Dr. Homi Bhabha Road,
Pashan, Pune 411 008, India}

\email{bivaskhan10@gmail.com}

\subjclass[2010]{14C20, 14M25, 14J60}

\keywords{Bott tower, nef cone, ample cone, Seshadri constant}

\date{}

\begin{abstract}
For a positive integer $n$, let $X_n \to X_{n-1} \to \ldots \to X_2
\to X_1 \to X_0$ be a Bott tower of height $n$, and let $L$ be a nef line bundle on
$X_n$. 
We compute Seshadri constants $\varepsilon(X_n,L,x)$ of $L$ at any point $x \in
X_n$ under some conditions. 
\end{abstract}

\maketitle

\section{Introduction}\label{intro}

Seshadri constants of line bundles reflect their local positivity.
Soon after Demailly introduced Seshadri constants in \cite{D},
there has been extensive work on them; they have turned out to
be important invariants. Let us briefly recall their
definition. 

Let $X$ be a complex projective variety, and let $L$ be a nef line bundle on
$X$. For a point $x \in X$, the \textit{Seshadri constant} of $L$ at $x$,
denoted by $\varepsilon(X,L,x)$, is defined
to be
$$\varepsilon(X,L,x)\,:=\, \inf\limits_{\substack{x \in C}} \frac{L\cdot
C}{{\rm mult}_{x}C}\, ,$$
where the infimum is taken over all closed curves $C\,\subset\, X$ passing through $x$. Here 
$L\cdot C$ denotes the intersection number while ${\rm mult}_x C$ denotes the 
multiplicity of the curve $C$ at $x$. To compute the Seshadri constant
$\varepsilon(X,L,x)$, it suffices to take only irreducible and reduced curves
$C$ in the above definition. 

There is another formulation of Seshadri constants which is often useful. 
Let $\pi\,:\, \widetilde{X}\,\longrightarrow\, X$ be the blow up of $X$ at $x$ and let $E$ denote the exceptional
divisor. Then $$\epsilon(X,L,x)\, =\,
{\rm sup}~\{\lambda \ge 0 ~|~ \pi^{\ast}(L)-\lambda E ~\rm{is~ nef}\}.$$

The Seshadri's criterion for ampleness of a line bundle says
that $L$ is ample if and only if $\varepsilon(X,L,x) \,>\, 0$ for all
$x\,\in\, X$.  Indeed, if $L$ is ample then $mL$ is very ample for some positive integer $m$. Then it
is easy to check that $\varepsilon(X,L,x) \,\geq\, \frac{1}{m}$ for all $x$. For the converse, we use
the Nakai-Moishezon criterion to verify ampleness of $L$. By induction on the dimension of $X$, we can 
assume that $L^i\cdot Y > 0$ for every closed subvariety $Y \,\subset\, X$ of dimension $i \,<\, n\,:=\, \text{dim}(X)$.  
It remains to prove that $L^n > 0$, where $L^n$ denotes the 
top self-intersection number of $L$. For this, let $\pi\,:\, \widetilde{X}\,\longrightarrow\, X$ be the blow up of $X$ at a 
smooth point $x \in X$ as above. Then $\pi^{\ast}L-\varepsilon(X,L,x)E$ is nef.
So  $(\pi^{\ast}L-\varepsilon(X,L,x)E)^n \ge 0$ which implies 
$L^n \ge \varepsilon(X, L,x)^n > 0$. See the proof of \cite[Chapter 1, Page 37, Theorem 7.1]{Har} for more details. 

Let $L$ be an ample line bundle on a projective variety $X$. It is easy to see that 
$$\varepsilon(X,L,x) \,\le\, \sqrt[n]{L^n}\,.$$  One then defines 
$$\varepsilon(X,L,1) \,:=\, \sup\limits_{x\in X}
\varepsilon(X,L,x)\, .$$ 
Similarly, we have 
$$\varepsilon(X,L) \,:=\, \inf\limits_{\substack{x \in X}} \varepsilon(X,L,x)\, .$$

Seshadri constants have many interesting applications and they are now
the focus of a very active area of research. 
Some of the guiding problems on Seshadri constants involve computing
Seshadri constants, giving
bounds on them, checking if they
are irrational, and interpolation problems. Computing Seshadri
constants is frequently very difficult and usually it is only possible to give some bounds. In
some special cases, however, it is possible to compute
them exactly. In this paper we compute Seshadri constants of line bundles on Bott towers
at all points. 

Most of the existing work on Seshadri constants has been in the case
of surfaces. Among the few cases in higher dimensions where Seshadri
constants have been studied are abelian varieties (for example, see \cite{Na, La, Ba,
Deb}), toric varieties (for example, see \cite{DiRocco, HMP, It1,
It2}), Fano varieties (for example, see \cite{BS,
LZ}), and Grassmann bundles over curves (\cite{BHNN}). For a survey of
research around Seshadri constants, see \cite{B}. 

In this paper, we study Seshadri constants for line bundles on Bott
towers. We recall that Bott towers are special classes of toric varieties constructed
iteratively as projective bundles of rank two vector bundles starting
with the projective line $\mathbb{P}^1$. One can view them as
a generalization of \textit{Hirzebruch surfaces}, which are
geometrically ruled surfaces over $\mathbb{P}^1$. See Section
\ref{bott} for more details on Bott towers. 

Seshadri constants of line bundles on Hirzebruch surfaces have been
computed (see \cite{Sy,Ga,HM}). In this paper we generalize this
computation to Bott towers under some conditions. Our main result (Theorem \ref{main})
computes the Seshadri constants for an arbitrary nef line bundle on
a Bott tower at any point.

As noted above, Seshadri constants for line bundles on
toric varieties have been studied by various authors. But for an
arbitrary toric variety, Seshadri constants have been computed only
for some classes of points, such as torus fixed points or points on the
torus; see Remark \ref{hmp} and Remark \ref{ito}. 
In this paper, using the additional structure of a Bott tower,
we compute Seshadri constants at arbitrary points. 

In Section \ref{bott}, we recall the construction 
of Bott towers and prove some properties which are used in Section \ref{sc}. 
In Section \ref{sc}, we prove our main theorem
computing the Seshadri constants of nef
line bundles on Bott towers. In Subsection \ref{examples}, we include
some remarks comparing our results with existing results in the
literature and give examples illustrating our results. 

\subsection*{Notation} We work over the field of complex
numbers. We write $D_1 \,\sim_{\text{lin}}\, D_2$ (respectively, $D_1
\,\equiv\, D_2$) if the divisors $D_1, D_2$ are
linearly equivalent (respectively, numerically equivalent). When the
variety $X$ is clear from the context, the Seshadri constant
$\varepsilon(X,L,x)$ is denoted simply by $\varepsilon(L,x)$. 

\section*{Acknowledgements}

We thank the anonymous referee for a careful reading 
and numerous helpful suggestions which improved the paper. 
The first author is supported by a J. C. Bose Fellowship.
The third author is partially supported by DST SERB MATRICS grant MTR/2017/000243
and also a grant from Infosys Foundation. The last author is supported by NBHM DAE post-doctoral fellowship. This research was 
supported in part by the International Centre for Theoretical Sciences
(ICTS) during visits for participating in the following 
programs: Topics in Birational Geometry (Code: ICTS/TBG2020/01)
and Moduli of bundles and related structures (Code: ICTS/MBRS2020/02).

\section{Bott towers}\label{bott}

In this section, after recalling the construction of Bott towers along with some results about them,
we prove some results about Bott towers that will be used in the computation
of Seshadri constants.

Bott towers are a particular class of nonsingular projective toric
varieties. They were constructed by Grossberg and Karshon (see
\cite{GK}). Grossberg and Karshon
have also shown that Bott towers are degenerations of Bott-Samelson varieties, which are desingularizations of Schubert varieties. 

For an integer $n \, \ge \, 0$, a \textit{Bott tower of height $n$}
\begin{equation}\label{bn}
X_n \,\longrightarrow\, X_{n-1}\,\longrightarrow\, \ldots \,\longrightarrow\, X_2\,
\longrightarrow\, X_1 \,\longrightarrow\, X_0\,=\,\{\text{point} \}
\end{equation} 
is defined inductively as an iterated ${\mathbb P}^1$--bundle so that at the 
$k$-th stage of the tower, $X_k$ is of the form
$\mathbb{P}(\mathcal{O}_{X_{k-1}}\oplus \mathcal{L}$) 
for a line bundle $\mathcal{L}$ over $X_{k-1}$.
So $X_1$ is isomorphic to \(\mathbb{P}^1\), $X_2$ is
a Hirzebruch surface and so on. A classical
example is the product of projective lines, 
which arises when the line bundle $\mathcal{L}$ is
trivial at every stage. 

We call any stage $X_i$ of the tower $X_n$ in \eqref{bn} also a Bott tower.

\subsection{Fan structure of a Bott tower}

The multiplicative group $\C\setminus\{0\}$ will be denoted by $\C^*$.
Let $T \,\cong\, \left(\C^*\right)^n$ be an algebraic torus. 
Define its character lattice $$M\,:=\,\text{Hom}(T, \,\C^*) \,\cong\, \Z^n$$ and
the dual lattice $N\,:=\,\text{Hom}_{\Z}(M, \Z)$. Let $\Delta_n$ be a
fan in $N_{\R}\,:=\,N \otimes_{\Z} \R$ which defines the toric variety
$X_n$ under the action of the torus 
$T$. The set of edges of $\Delta_n$ will be denoted by
$\Delta_n(1)$. Let $e_1,\,\cdots,\, e_n$ be the standard basis for $\mathbb{R}^n$. 
Consider the following vectors:
\begin{equation}\label{ci}
\begin{split}
& v_1\,=\,e_1,\, \cdots,\, v_n\,=\,e_n, \\
& v_{n+1}\,=\,-e_1+c_{1, 2} e_2 + \ldots + c_{1, n} e_n,\\
& \vdots\\
& v_{n+i}\,=\,-e_i+c_{i, i+1} e_{i+1} + \ldots + c_{i, n} e_n, 1 \leq i <n,\\
&v_{2n}\,=\,-e_n.
\end{split}
\end{equation}
The fan $\Delta_n$ of $X_n$ is complete, and it consists of these $2n$
edges and $2^n$ maximal cones of dimension $n$ generated by these 
edges such that no cone contains both the edges $v_i$ and $v_{n+i}$
for $i\,=\,1,\, \cdots,\, n$. It
follows that any \(k\)-th stage Bott tower arises from a collection
of integers \(\{c_{i,j} \}_{1 \leq i< j \leq n}\) as in \eqref{ci}. These integers are
called the \textit{Bott numbers} of the given Bott tower. 
In this paper we will restrict our attention to the case when the
Bott numbers $\{c_{i, j}\}_{\{1\leq i < j \leq n \}}$ are all positive
integers. 

\subsection{Picard group of a Bott tower}

The following is recalled from \cite[Section 2.2]{KD}.

Let $D_i$ denote the invariant prime divisor corresponding to the 
edge $v_{n+i}$, and let $D'_i$ denote the invariant prime divisor corresponding to the edge $v_i$
for \(i\,=\,1,\, \cdots,\, n\). We have the following relations:
\begin{equation}\label{linequiv}
D'_1 \,\sim_{\text{lin}}\, D_1,\
D'_i \,\sim_{\text{lin}}\, D_{i}-c_{1, i}D_{1}-\ldots-c_{i-1, i}D_{i-1}
\end{equation}
for $i\,=\,2,\, \cdots,\, n$.
The Picard group of the Bott tower is $$\text{Pic}(X_n)\,=\,\Z
D_{1} \oplus \,\cdots\, \oplus \Z D_{n}\, .$$ 
If $L$ is a line bundle on $X_n$ which is numerically equivalent to
$a_1 D_1 + \ldots + a_n D_n$ for some integers $a_1,\,\cdots,\, a_n$, then we write \(L
\,\equiv\, (a_1,\, \cdots,\, a_n)\).

Let $D\,=\,\sum_{i=1}^{k}a_iD_{i}$ be a Cartier divisor on $X_n$. Then $D$
is ample (respectively, nef) if and only if $a_i \,> 0 \text{ (respectively, $a_i \ \geq \ 0$) }$ for all 
$i\,=\,1,\, \cdots, \,n$ (see \cite[Theorem 3.1.1, Corollary 3.1.2]{KD}).
	
\subsection{Quotient construction of a Bott tower}

We recall the quotient construction of Bott tower from \cite[Theorem
7.8.7]{BP}. The Bott tower \(X_n\) can be obtained as the 
quotient $U_{n}/\!\!/G_{n}$ of
\[U_n\,=\,\{(z_1,w_1, \cdots, z_n, w_n) \,\in\, \mathbb{C}^{2n} \,\mid\
|z_i|^2+|w_i|^2 \neq 0,\ 1 \,\leq\, i \,\leq\, n\} \,\cong\, (\mathbb{C}^2 \setminus {0})^n\]
for the action of the group
\[G_n=\{(t_{\rho})_{\rho \in \Delta_n(1)}
\in (\mathbb{C}^*)^{\Delta_n(1)} \mid \prod\limits_{\rho \in
\Delta_n(1)} t_{\rho}^{\langle u_i, v_{\rho} \rangle}=1\} \cong
(\mathbb{C}^*)^n\, ,\] 
where \(u_1,\, \cdots, \,u_n\) is a basis of \(M\). More explicitly, 
the inclusion \((\mathbb{C}^*)^n \,\hookrightarrow\, (\mathbb{C}^*)^{2n}\) is given by 
\begin{equation*}
(t_1,\, \cdots,\, t_n) \,\longmapsto\,
(t_1,\, t_1,\, t_1^{-c_{1,2}} t_2,\, t_2, \,\cdots,\, t_1^{-c_{1,n}} t_2^{-c_{2,n}}
\cdots t_{n-1}^{c_{n-1, n}} t_n,\,t_n ).
\end{equation*}
A point of \(X_n\) is denoted by the equivalence class \([z_1:w_1: \ldots: z_n: w_n]\).
Note that \(D'_i\) (respectively, \(D_i\)) is just the vanishing locus of the coordinate \(z_i\) (respectively,
\(w_i\)), i.e.,
\begin{equation}\label{ref_div}
	D'_i\,=\,\mathbb{V}(z_i) ~ (\text{respectively}, D_i\,=\,\mathbb{V}(w_i) ) \text{ for } 1 \,\leq\, i \,\leq\,
	n
\end{equation}
 (see \cite[Example 5.2.5]{Cox}).

We have \[U_n \,\cong\, U_{n-1} \times (\mathbb{C}^2 \setminus {0}),\ \
(z_1,w_1, \cdots, z_n, w_n) \,\longmapsto\, ((z_1,w_1, \cdots, z_{n-1}, w_{n-1}) \times (z_n, w_n))\] and 
\[G_n \,\cong\, G_{n-1} \times \mathbb{C}^*,\ \ (t_1,\, \cdots,\, t_n) \,\longmapsto\, ((t_1, \,\cdots,\, t_{n-1}),
\, t_n),\]
where the last factor \(t_n\) acts trivially on \(U_{n-1}\). Thus \(X_{n-1}=U_{n-1}/\!\!/G_{n-1}\) is the Bott tower associated to the Bott numbers $\{c_{i, j}\}_{\{1\leq i < j \leq n-1 \}}$. This also induces the map 
\begin{equation*}
X_n \,\longrightarrow\, X_{n-1},\ \ [z_1:w_1: \ldots,:z_n:w_n] \,\longmapsto\, [z_1:w_1: \ldots: z_{n-1}:w_{n-1}].
\end{equation*}
In general, for each \(1 \,\leq\, i \,\leq\, n\), there is a map 
\begin{equation*}
X_i \,\longrightarrow\, X_{i-1},\ \ [z_1:w_1: \ldots: z_i:w_i] \,\longmapsto\,
[z_1:w_1: \ldots: z_{i-1}:w_{i-1}]
\end{equation*}
together with a section given by
\begin{equation*}\label{qt_section}
X_{i-1}\,\longrightarrow\, X_i,\ \ [z_1:w_1: \ldots: z_{i-1}:w_{i-1}] \,\longmapsto\,
[z_1:w_1: \ldots: z_{i-1}:w_{i-1}:0:1]\, .
\end{equation*}

\subsection{Basic set-up }

Fix a point $x\,\in \,X_n$. Now we describe a special class of 
subvarieties
$X_n^{(j)}$ of $X_n$ for $1 \,\le\, j \,\le\, n$ equipped with rational curves
$\Gamma_n^{(j)} \,\subset\, X_n^{(j)}$. We emphasize that
these subvarieties and rational curves \textit{depend on the given point
$x$}. However, for convenience, we omit indicating this in the notation. 

Set $X_i^{(1)} \,:=\, X_i$ for every $1 \,\le\, i \,\le\, n$. For every $2 \,\le\, i \,\le\, n$, 
let $$\pi_i\,:\, X_i \,\longrightarrow\, X_1$$ be the composition of maps in
\eqref{bn}. Define $X_i^{(2)} \,:=\,
\pi_i^{-1}(\pi_n(x))$. Note that $x \,\in\, X_n^{(2)}$. 

Then $X_n^{(2)}\,\longrightarrow\, X_{n-1}^{(2)}\,\longrightarrow\, \cdots
\,\longrightarrow\, X_2^{(2)}$ is a Bott tower (see Proposition
\ref{vertical-tower} below). 
For every $3 \,\le\, i \,\le\, n$, 
let $\pi_{2,i}\,:\, X_i^{(2)} \,\longrightarrow\, X_2^{(2)}$ be the composition of
these maps. Define $X_i^{(3)} \,:=\,\pi_{2,i}^{-1}(\pi_{2,n}(x))$. 

Proceeding this way, we define $X_i^{(j)}$ for every $1\,\le\, j \,\le \,i \,\le\,
n$. Note that $x \in X_n^{(j)}$ for all $1 \, \le \, j \, \le \, n$. 
Further, $X_i^{(i)} \,= \,\mathbb{P}^1$ for each $1 \,\le\, i \,\le\, n$.
See Figure \ref{diag1} below.

\begin{figure}[h]
$$
\xymatrix{
\mathbb{P}^1 = X_n^{(n)}\ar@{}[r]|{\subset} & X_n^{(n-1)}\ar@{}[r]|{\subset}\ar[d]
& X_n^{(n-2)}\ar@{}[r]|{\subset}\ar[d] & ...\ar@{}[r]|{\subset} & X_n^{(3)}\ar@{}[r]|{\subset}\ar[d] & X_n^{(2)}\ar@{}[r]|{\subset}\ar[d] & X_n^{(1)}\ar[d]\\
 &\mathbb{P}^1 = X_{n-1}^{(n-1)}\ar@{}[r]|{\subset} & X_{n-1}^{(n-2)}\ar@{}[r]|{\subset}\ar[d]  & ....\ar@{}[r]|{\subset}  & X_{n-1}^{(3)}\ar@{}[r]|{\subset}\ar[d]  &  X_{n-1}^{(2)}\ar@{}[r]|{\subset}\ar[d]  & X_{n-1}^{(1)}\ar[d]\\
& & X_{n-2}^{(n-2)}\ar@{}[r]|{\subset}  &
....\ar@{}[r]|{\subset}  & X_{n-2}^{(3)}\ar@{}[r]|{\subset}\ar[d]  &
X_{n-2}^{(2)}\ar@{}[r]|{\subset}\ar[d]  &
X_{n-2}^{(1)}\ar[d]\\
&&&.&.&.&.&\\
&&&.&.\ar[d]&.\ar[d]&.\ar[d]& \\
&&&&X_3^{(3)} \ar@{}[r]|{\subset} &X_3^{(2)} \ar@{}[r]|{\subset}\ar[d]&X_3^{(1)} \ar[d]\\
&&&&&X_2^{(2)} \ar@{}[r]|{\subset}&X_2^{(1)}\ar[d]\\
&&&&&& \mathbb{P}^1 = X_1^{(1)}}
$$	
\caption{Construction of $X_i^{(j)}, 1 \, \le \, j  \, \le i \le n$}
	\label{diag1}
	\end{figure}

\begin{proposition}\label{vertical-tower}
Each vertical tower in Figure \ref{diag1} is a Bott tower with positive invariants.
\end{proposition}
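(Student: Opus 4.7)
The plan is to induct on $j$, with the base case $j=1$ being immediate since $X_i^{(1)}=X_i$ by definition, so the first column is the given Bott tower $X_n \to X_{n-1} \to \cdots \to X_1$, whose Bott numbers $\{c_{i,j}\}_{1\le i<j\le n}$ are positive by the standing assumption. For the inductive step, after relabeling, it suffices to show the following: if $Y_m \to Y_{m-1} \to \cdots \to Y_1=\mathbb{P}^1$ is a Bott tower with positive Bott numbers $\{c'_{i,j}\}_{1\le i<j\le m}$, and $p\in Y_1$, then the fiber sequence $\pi_m^{-1}(p) \to \pi_{m-1}^{-1}(p) \to \cdots \to \pi_2^{-1}(p)$ (with $\pi_i\colon Y_i\to Y_1$ the composition) is a Bott tower with positive Bott numbers $\{c'_{i,j}\}_{2\le i<j\le m}$. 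Applying this to the tower forming the $(j-1)$-th column identifies the $j$-th column as a Bott tower whose Bott numbers form a subset of the original positive ones.

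To prove the reduced statement I would use the quotient construction of the Bott tower recalled from \cite[Theorem~7.8.7]{BP}. Writing $Y_i = U_i/\!\!/G_i$ with $U_i\cong (\mathbb{C}^2\setminus\{0\})^i$ and $G_i\cong(\mathbb{C}^*)^i$, the map $\pi_i$ reads $[z_1:w_1:\ldots:z_i:w_i]\mapsto[z_1:w_1]$. Fix $p=[\alpha:\beta]$; we may normalize a representative in the fiber so that $(z_1,w_1)=(\alpha,\beta)$. The stabilizer in $G_i$ of $[\alpha:\beta]$ under the first factor of the action formula
\begin{equation*}
(t_1,\ldots,t_i)\longmapsto(t_1,t_1,\,t_1^{-c'_{1,2}}t_2,t_2,\,\ldots,\,t_1^{-c'_{1,i}}t_2^{-c'_{2,i}}\cdots t_{i-1}^{-c'_{i-1,i}}t_i,\,t_i)
\end{equation*}
is exactly the subgroup $\{1\}\times(\mathbb{C}^*)^{i-1}$. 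Substituting $t_1=1$ into the action on the remaining coordinates $(z_2,w_2,\ldots,z_i,w_i)$ gives
\begin{equation*}
(t_2,\ldots,t_i)\longmapsto(t_2,t_2,\,t_2^{-c'_{2,3}}t_3,t_3,\,\ldots,\,t_2^{-c'_{2,i}}\cdots t_{i-1}^{-c'_{i-1,i}}t_i,\,t_i),
\end{equation*}
which is precisely the quotient presentation of the Bott tower with Bott numbers $\{c'_{i,j}\}_{2\le i<j\le m}$. Hence $\pi_i^{-1}(p)$ is isomorphic to the $(i-1)$-st stage of that Bott tower.

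Finally, I would check compatibility of the tower maps: the map $Y_i\to Y_{i-1}$ forgets $(z_i,w_i)$, so its restriction to $\pi_i^{-1}(p)\to \pi_{i-1}^{-1}(p)$ agrees with the natural projection in the quotient Bott tower just identified, and similarly for the sections. Since the Bott numbers appearing in the sub-Bott-tower are a subcollection of the original positive $c'_{i,j}$, positivity is inherited.

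The main obstacle is essentially bookkeeping: one has to substitute carefully into the quotient action formula and match the resulting cocycle with the standard presentation of a Bott tower of lower height, and then propagate this through the induction to read off the Bott numbers of the $j$-th column as $\{c_{i,j}\}_{j\le i<\ell\le n}$ (a subset of the original). Apart from this indexing care, no new geometric input is required beyond the quotient construction already recalled in Section~\ref{bott}.
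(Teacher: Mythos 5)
Your proposal is correct and follows essentially the same route as the paper: both use the quotient presentation $U_i/\!\!/G_i$ to identify the fibers (the $j$-th column) with the Bott tower built from the homogeneous coordinates $(z_j,w_j,\ldots,z_n,w_n)$, with Bott numbers the subcollection $\{c_{k,l}\}_{j\le k<l\le n}$, whence positivity is inherited; the paper simply does this in one step by freezing the first $j-1$ coordinate pairs and dropping them, while you phrase it as an induction over columns with the residual-group computation (setting $t_1=1$) made explicit.
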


\begin{proof}
Fix a point \(x\,=\,[z_1^0:w_1^0: \ldots :z_n^0: w_n^0]\, \in\, X_n\). Then for \(j \leq i\),
\begin{equation*}
\begin{split}
X_i^{(j)}\,=\,
\{[z_1^0:w_1^0: \ldots :z_{j-1}^0:w_{j-1}^0:z_j:w_j:z_{j+1}:&w_{j+1}:\ldots
:z_i:w_i]\,\,\mid\\
& (z_l,w_l) \in \C^2 \setminus 0 \text{ for } j \leq l \leq i \} \subset X_i.
\end{split}
\end{equation*}
This can be identified with a Bott tower of dimension \({i-j+1}\) with Bott numbers
\(\{c_{k,l}\}_{\{j\leq k<l \leq i\}}\) via the map \[[z_1^0:w_1^0: \ldots :z_{j-1}^0:w_{j-1}^0:z_j:w_j:\ldots :z_i:w_i]
\,\longmapsto\, [z_j:w_j:\ldots :z_i:w_i].\] Similarly \(X_{i-1}^{(j)}\) (provided \(j \leq i-1\)) can be identified with a Bott tower of dimension \({i-j}\) with Bott numbers \(\{c_{k,l}\}_{\{j\leq k<l \leq i-1\}}\). Also note that the map \(X_i^{(j)} \rightarrow X_{i-1}^{(j)}\) is defined by
\begin{equation*}
\begin{split}
[z_1^0:w_1^0: \ldots :z_{j-1}^0:w_{j-1}^0:&z_j:w_j:\ldots :z_i:w_i]\\
& \longmapsto\, [z_1^0:w_1^0: \ldots :z_{j-1}^0:w_{j-1}^0:z_j:w_j:\ldots :z_{i-1}:w_{i-1}].
\end{split}
\end{equation*}
Thus each vertical tower in Figure \ref{diag1} of the form
$$X_n^{(j)} \,\longrightarrow\, X^{(j)}_{n-1} \,\longrightarrow\, \ldots \,\longrightarrow \,X^{(j)}_{j+1}
\,\longrightarrow\, X^{(j)}_j$$ is a Bott tower with Bott numbers \(\{c_{k,l}\}_{\{j\leq k<l \leq n\}}\).
Since all the Bott numbers were assumed to be positive, this completes the proof.
\end{proof}

\begin{proposition}\label{fibre}
Let \(x \,\in\, X_n\) be a point. Then $X_n^{(2)} \,\equiv\, D_1$.
\end{proposition}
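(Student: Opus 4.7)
The plan is to realize both $X_n^{(2)}$ and $D_1$ as fibers of the composed projection $\pi_n\colon X_n \to X_1 \cong \mathbb{P}^1$, and then to invoke the fact that any two fibers over points of $\mathbb{P}^1$ are linearly (hence numerically) equivalent.

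First I would unwind the definition of $X_n^{(2)}$ in the quotient construction. Writing the point $x = [z_1^0 : w_1^0 : \cdots : z_n^0 : w_n^0]$, the map $\pi_n\colon X_n \to X_1$ sends this to $[z_1^0 : w_1^0]$, so
\[
X_n^{(2)} \;=\; \pi_n^{-1}\bigl([z_1^0:w_1^0]\bigr) \;=\; \bigl\{[z_1^0:w_1^0:z_2:w_2:\cdots:z_n:w_n]\bigr\}.
\]
In particular $X_n^{(2)}$ is a fiber of $\pi_n$.

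Next I would identify $D_1$ with another such fiber up to linear equivalence. By \eqref{ref_div}, the divisor $D'_1$ is the vanishing locus of $z_1$, which coincides with $\pi_n^{-1}([0:1])$, again a fiber of $\pi_n$. The relation \eqref{linequiv} then gives $D_1 \sim_{\text{lin}} D'_1$, so $D_1$ is linearly equivalent to the fiber $\pi_n^{-1}([0:1])$.

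Finally, since $X_1 \cong \mathbb{P}^1$, any two fibers of $\pi_n$ over points of $X_1$ are linearly equivalent: one can exhibit an explicit linear equivalence via the pullback under $\pi_n$ of the difference of two points on $\mathbb{P}^1$. Thus $X_n^{(2)} \sim_{\text{lin}} D'_1 \sim_{\text{lin}} D_1$, which in particular implies the numerical equivalence $X_n^{(2)} \equiv D_1$. The only mildly delicate point is the verification that $X_n^{(2)}$, as defined scheme-theoretically via the preimage, really is the reduced fiber with the expected divisor class; this is immediate from the quotient description, where $\pi_n$ is a flat morphism with irreducible reduced fibers.
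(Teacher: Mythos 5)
Your proof is correct, but it follows a genuinely different route from the paper's. You identify $X_n^{(2)}$ and $D_1'=\mathbb{V}(z_1)$ as two fibers of the composed projection $\pi_n\colon X_n\to X_1\cong\mathbb{P}^1$ (over $\pi_n(x)$ and over $[0:1]$ respectively), pull back the linear equivalence of two points on $\mathbb{P}^1$, and finish with $D_1\sim_{\text{lin}}D_1'$ from \eqref{linequiv}; the only point needing care, that the divisor $\pi_n^*(\text{point})$ is the reduced fiber, is fine because $\pi_n$ is a composition of $\mathbb{P}^1$-bundle projections, hence smooth with irreducible fibers (compare Proposition \ref{vertical-tower}). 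The paper instead works numerically: it writes $X_n^{(2)}\equiv b_1D_1+\ldots+b_nD_n$, uses the duality $D_i\cdot C_j=\delta_{ij}$ with the invariant curves $C_i=V(\tau_i)$ to get $b_i=X_n^{(2)}\cdot C_i$, and computes these intersections explicitly in the quotient coordinates, after first disposing of the degenerate cases $z_1^0=0$ and $w_1^0=0$ where $X_n^{(2)}$ is literally $D_1'$ or $D_1$. Your argument is more conceptual: it needs no case analysis, no toric intersection theory beyond \eqref{linequiv}, and it yields the stronger conclusion $X_n^{(2)}\sim_{\text{lin}}D_1$ rather than just numerical equivalence. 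The paper's computation, on the other hand, stays entirely inside the explicit coordinate/intersection framework that it reuses immediately afterwards (e.g.\ in Proposition \ref{dual-basis}), so it requires no discussion of flatness or reducedness of fibers. Either way the proposition follows.
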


\begin{proof}
Let \(x=[z_1^0:w_1^0: \ldots : z_n^0: w_n^0] \,\in\, X_n\) be as before. Then \[X^{(2)}_n
\,=\,\{[z_1^0:w_1^0: z_2:w_2 : \ldots: z_n:w_n]~ |~ (z_i,w_i) \in \C^2 \setminus 0\ \text{ for } 2 \leq i \leq n \}.\]
Note that if \(z_1^0\,=\,0\), then \(X^{(2)}_n\,=\,D'_1 \sim_{\text{lin}} D_1\) (by \eqref{linequiv}),
and \(X^{(2)}_n\,=\,D_1\) when \(w_1^0\,=\,0\). So we can assume both \(z_1^0,w_1^0\) are
non-zero. Since \(X^{(2)}_n\) is a divisor in \(X_n\), we can write 
\begin{equation}\label{numequiv}
X^{(2)}_n \,\equiv\, b_1 D_1 + \ldots + b_nD_n,\ \text{ with } b_1,\, \cdots,\, b_n \,\in\, \Z.
\end{equation}
Recall that the \((n-1)\)-dimensional cones in the fan correspond to invariant curves in the toric
variety; let \(V(\tau)\) denote the invariant curve corresponding to the \((n-1)\)-dimensional
cone \(\tau\) in the fan. Now consider the curves \(C_i\,=\,V(\tau_i)\) in \(X_n\),
where \(\tau_i\,=\,\text{Cone}(v_1,\, \cdots,\, \widehat{v}_i, \,\cdots,\, v_n)\) are the \((n-1)\)-dimensional cones
in the fan \(\Delta_n\). Since \(D_i \cdot C_j\,=\,\delta_{ij}\) (see
\cite[Corollary 6.4.3, Proposition 6.4.4]{Cox}),\, \eqref{numequiv} gives that \(b_i\,=\,X^{(2)}_n \cdot C_i\). 
	
Let \(i\,>\,1\). Then \(C_i\,=\,D'_1 \cap \left(  \cap_{j\neq 1, i} D'_{j}\right) \).
This implies that \(X^{(2)}_n  \cap C_i\,=\, \emptyset\), i.e., \(b_i\,=\,0\).
Now \(X^{(2)}_n \cap C_1=\{[z_1^0:w_1^0:0:1: \ldots : 0:1]\}\). So \(X^{(2)}_n \cdot C_1=1\),
i.e., \(b_1=1\). Thus  $X_n^{(2)} \,\equiv\, D_1$.
\end{proof}

Let $1 \,\le\, i \,\le\, n$. Let $X_n^{(i)} \longrightarrow X^{(i)}_{n-1}\longrightarrow \ldots
\longrightarrow X^{(i)}_{i+1}\longrightarrow X^{(i)}_i$ be a vertical Bott tower 
in Figure \ref{diag1}. Then by the construction of projective bundles,
there is a section map 
$X_{j-1}^{(i)} \,\longrightarrow\, X_{j}^{(i)}$ for every $i+1 \,\le\, j \,\le\, n$. 

For each $1\,\le\, i \,\le\, n$, let $\sigma_i\,:\, X_i^{(i)} \,\longrightarrow\, X_n^{(i)}$ be the composition
of section maps. Define
\begin{equation}\label{gni}
\Gamma_n^{(i)} \,:=\,\sigma_i( X_i^{(i)})\, .
\end{equation}
We have $\Gamma_n^{(i)} \subset X_n^{(i)}$ for each $i$ and
$\Gamma_n^{(n)} \,=\, X_n^{(n)}$. We denote $\Gamma_n^{(1)}$ also by
$\Gamma_n$. See Figure \ref{diag2} below.

\begin{figure}[h]
	$$
	\xymatrix{
		X_n^{(n)}\ar@{}[r]|{\subset}  &  X_n^{(n-1)}\ar@{}[r]|{\subset}
		& ... \ar@{}[r]|{\subset}  &
		X_n^{(3)}\ar@{}[r]|{\subset}  &  X_n^{(2)}\ar@{}[r]|{\subset} &
		X_n^{(1)}\\
		\Gamma_n^{(n)}\ar@{}[u]|{||}  &  \Gamma_n^{(n-1)}\ar@{}[u]|{\cup}
		& ...  &
		\Gamma_n^{(3)}\ar@{}[u]|{\cup}  &  \Gamma_n^{(2)}\ar@{}[u]|{\cup} &
		\Gamma_n = \Gamma_n^{(1)}\ar@{}[u]|{\cup}\\
		\sigma_n( X_n^{(n)})\ar@{}[u]|{||}  &  \sigma_{n-1}( X_{n-1}^{(n-1)})\ar@{}[u]|{||}
		& ...  &
		\sigma_3( X_3^{(3)})\ar@{}[u]|{||} &  \sigma_2( X_2^{(2)})\ar@{}[u]|{||} &
		\sigma_1( X_1^{(1)})\ar@{}[u]|{||}}
	$$	
	\caption{Construction of $\Gamma_n^{(i)}, 1 \le i \le n$}
	\label{diag2}
\end{figure}

\begin{proposition}\label{dual-basis}
The curves $\Gamma_n,\, \Gamma_n^{(2)},\,\cdots,\,\Gamma_n^{(n)}$ defined in \eqref{gni} span 
$\overline{\rm{NE}}(X_n)$, and they are dual to $D_1,\, \cdots ,\, D_n$. 
\end{proposition}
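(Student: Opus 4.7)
The plan is to establish the duality $D_j\cdot \Gamma_n^{(i)} = \delta_{ij}$ first by a direct calculation in the quotient coordinates, and then to deduce the Mori-cone statement from cone duality with the nef cone. I would begin by unwinding the construction of $\Gamma_n^{(i)}$: iterating the section maps $X_{k-1}^{(i)} \hookrightarrow X_k^{(i)}$ from the quotient construction, each of which appends $(0:1)$ to the last pair of homogeneous coordinates, yields the explicit description
$$\Gamma_n^{(i)} \,=\, \bigl\{[z_1^0:w_1^0:\cdots :z_{i-1}^0:w_{i-1}^0:z_i:w_i:0:1:\cdots :0:1]\ :\ [z_i:w_i]\in \mathbb{P}^1\bigr\},$$
a rational curve in $X_n$ parametrized by $(z_i,w_i)\in \mathbb{C}^2\setminus\{0\}$.

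Next, I would compute $D_j\cdot \Gamma_n^{(i)}$ using the identification $D_j = \mathbb{V}(w_j)$ from \eqref{ref_div}. Since the intersection number depends only on the numerical class, and since translating $x$ by the torus action yields an algebraic family of curves $\Gamma_n^{(i)}(t\cdot x)$ all in the same numerical class, I may assume $x$ lies in the dense open torus orbit, so that $w_j^0\neq 0$ for every $j$. Then, on the curve: for $j>i$ the coordinate $w_j$ is the constant $1$; for $j<i$ it is the nonzero constant $w_j^0$; and only for $j=i$ does $w_j$ vanish, namely at the single transverse point where $w_i = 0$. Hence $D_j\cdot \Gamma_n^{(i)} = \delta_{ij}$, so the classes $[\Gamma_n^{(1)}],\ldots,[\Gamma_n^{(n)}]$ form a dual basis to $[D_1],\ldots,[D_n]$ under the intersection pairing.

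Finally, I would invoke nef-Mori duality. By the characterization of nef divisors on $X_n$ cited from \cite[Theorem 3.1.1, Corollary 3.1.2]{KD}, a divisor $\sum a_i D_i$ is nef if and only if all $a_i\geq 0$, and so $\mathrm{Nef}(X_n) = \sum_{i=1}^n \mathbb{R}_{\geq 0}[D_i]$ is a simplicial cone with $n$ extremal rays. Since $\overline{\mathrm{NE}}(X_n)$ is the closed dual of $\mathrm{Nef}(X_n)$ under the intersection pairing, and the dual of a simplicial cone generated by one side of a dual pair of bases is the simplicial cone generated by the other side, the Mori cone is spanned by $[\Gamma_n^{(1)}], \ldots, [\Gamma_n^{(n)}]$.

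The main obstacle is the mild dependence of the constructed curves on the choice of the point $x$: if $x$ happens to lie on some $D_j$ with $j < i$, then $\Gamma_n^{(i)}$ can itself be contained in $D_j$ and the set-theoretic intersection becomes improper. This is circumvented by the torus-translation argument used above, which reduces the entire intersection-number computation to the generic case where $x$ lies in the big torus orbit; everything else in the proof then consists of the purely formal cone-duality step.
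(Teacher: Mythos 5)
Your overall strategy (establish $D_j\cdot\Gamma_n^{(i)}=\delta_{ij}$ by a coordinate computation, then dualize against the simplicial nef cone spanned by $D_1,\dots,D_n$) is the same as the paper's, and both the generic-case computation and the final cone-duality step are fine. The gap is in the reduction to the generic case. The torus acts on $X_n$ by scaling the quotient coordinates, so every divisor $D_j=\mathbb{V}(w_j)$ is torus-invariant: if $w_j^0=0$ for some $j<i$, then the $w_j$-coordinate of $t\cdot x$ vanishes for \emph{every} $t\in T$, hence $\Gamma_n^{(i)}(t\cdot x)\subset D_j$ for every $t$. Torus translation never moves a point into the dense orbit (orbit closures only pick up smaller orbits), so the step ``I may assume $x$ lies in the dense open torus orbit'' is unjustified precisely in the improper-intersection case it was introduced to circumvent. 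This is exactly the case the paper has to work for: there, when $w_j^0=0$ (which forces $j<i$ and $z_j^0\neq 0$), one replaces $D_j$ by the linearly equivalent divisor $D_j'+c_{1,j}D_1+\cdots+c_{j-1,j}D_{j-1}$ from \eqref{linequiv}, observes $D_j'\cap\Gamma_n^{(i)}=\emptyset$, and kills the remaining terms by induction on $j$.

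Your family idea can be repaired without that induction, but with a different family: $\Gamma_n^{(i)}(x)$ depends only on the first $i-1$ coordinate pairs of $x$, and as these vary over $X_{i-1}$ the curves sweep out the image $Y\cong X_i$ of the iterated section $X_i\hookrightarrow X_n$ (append $(0{:}1)$ in the last $n-i$ slots); they are exactly the fibres of the $\mathbb{P}^1$-bundle $Y\to X_{i-1}$, hence all algebraically, so numerically, equivalent. Replacing torus translation by this family over the connected base $X_{i-1}$ legitimately reduces the computation to a point with all $z_j^0,w_j^0\neq 0$, after which your case analysis gives $D_j\cdot\Gamma_n^{(i)}=\delta_{ij}$ as claimed (at $j=i$ you should still justify that the unique intersection point counts with multiplicity one, e.g.\ by a local computation in the chart $z_i\neq 0$; the paper is equally brief there). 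With that repair, the rest of your argument goes through.
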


\begin{proof}
Fix a point \(x\,=\,[z_1^0:w_1^0: \ldots: z_n^0: w_n^0]\, \in \,X_n\). Then for \(1 \,\leq \,i \,\leq\, n\),
\begin{equation}\label{curve}
	\Gamma_n^{(i)}\,=\,\{[z_1^0:w_1^0: \ldots :z_{i-1}^0:w_{i-1}^0:z_i:w_i:0:1:\ldots :0:1]\,\mid\, (z_i,w_i)\, \in\,
\C^2 \setminus 0 \}\, \subset\, X_n^{(i)}.
\end{equation}
Now \(D_1 \cap \Gamma_n^{(1)}\,=\,\{[1:0:0:1:\ldots :0:1]\}\), and hence 
\begin{equation*}
D_1 \cdot \Gamma_n^{(1)}\,=\,1.
\end{equation*}
For \(1\,<\,i \,\leq\, n\), we have 
\begin{equation*}
D_1 \cap \Gamma_n^{(i)} \,=\,
\begin{cases}
\emptyset, & {\rm if}\ w_1^0 \neq 0, \\
\Gamma_n^{(i)}, & {\rm if}\ w_1^0 = 0.
\end{cases}
\end{equation*} 
	 
Thus \( D_1 \cdot  \Gamma_n^{(i)}\,=\,0\) when \(w_1^0 \,\neq\, 0\).  But
\(D_1 \sim_{\text{lin}} D'_1\) by \eqref{linequiv}, and \(D'_1 \cap  \Gamma_n^{(i)}\,=\, \emptyset \)
when \(w_1^0\,=\,0\) because
\((z_1^0, w_1^0) \,\in\, \C^2 \setminus 0\). Therefore, \(D_1 \cdot  \Gamma_n^{(i)}\,=\,0\)
when \(1 \,<\,i \,\leq\, n\). Thus for \(1 \leq i \leq n\), we have
 \begin{equation*}
 D_1 \cdot  \Gamma_n^{(i)}\,=\,\delta_{i1}.
 \end{equation*}
Fix \(1\,<\,j \,\leq\, n\), and assume that 
 \begin{equation}\label{moriassump}
 D_k \cdot  \Gamma_n^{(i)}\,=\,\delta_{ik}
 \end{equation}
whenever $1 \,\leq\, i \,\leq\, n$ and $1 \,\leq\, k\, <\,j$. 

We will show that \( D_j \cdot  \Gamma_n^{(i)}\,=\,\delta_{ij}\) for \(1 \,\leq \,i \,\leq\, n\). 
 First note that $$D_j \cap \Gamma_n^{(j)}\,=\,\{[z_1^0:w_1^0: \ldots
 :z_{j-1}^0:w_{j-1}^0:1:0: 0:1:\ldots :0:1] \}$$ 
and hence \( D_j \cdot  \Gamma_n^{(j)}=1\). For \(1 \leq i \leq n, i \neq j\), we have 
 \begin{equation*}
 	D_j \cap  \Gamma_n^{(i)} =
 	\begin{cases}
 		\emptyset, &
 		{\rm if}\ w_j^0 \neq 0, \\
 		\Gamma_n^{(i)},  & {\rm if}\ w_j^0 = 0.
 	\end{cases}
 \end{equation*} 
Thus for \(w_j^0 \neq 0\), we have that \( D_j \cdot
\Gamma_n^{(i)}=0\). On the other hand, if \(w_j^0 =0\), then $z_j^0 \neq 0$ and hence 
\begin{equation}\label{mori1}
D'_j \cdot  \Gamma_n^{(i)}=0. 
\end{equation}
Again from \eqref{linequiv}, we have that 	
\begin{equation}\label{mori2}
D_j \sim_{\text{lin}} D'_{j}+c_{1, j}D_{1}+\ldots+c_{j-1, j}D_{j-1}.
\end{equation}
Note that \(w_j^0 =0\) is possible only if \(j<i\), and then by \eqref{moriassump}, we have that 
\begin{equation}\label{mori3}
D_k \cdot  \Gamma_n^{(i)}\,=\,0 
\end{equation}
holds for $1 \,\leq\, k\, <\,j$.

Then from \eqref{mori1}, \eqref{mori2} and \eqref{mori3}, it follows that
\(D_j \cdot  \Gamma_n^{(i)}\,=\,0. \) Hence \( D_j \cdot 
\Gamma_n^{(i)}\,=\,\delta_{ij}\) holds also for \(1 \,\leq\, i \,\leq\, n\). 

So $\Gamma_n,\, \Gamma_n^{(2)},\,\cdots,\,\Gamma_n^{(n)}$ are
dual to $D_1, \,\cdots ,\, D_n$. This implies that they span
$\overline{\text{NE}}(X_n)$, because $D_1,\,\cdots,\,D_n$ span the nef cone
of $X_n$ and $\overline{\text{NE}}(X_n)$ is dual to the nef cone of
$X_n$. 
\end{proof}

\begin{remark}\label{alt-char} As an immediate consequence of the descriptions in \eqref{ref_div} and \eqref{curve}, we have
the following alternative characterization of the curve $\Gamma_n$: $$\Gamma_n = D_2' \cap \ldots \cap D_n'.$$ 
More generally, for a point \(x\,=\,[z_1^0:w_1^0: \ldots: z_n^0: w_n^0]\, \in \,X_n\), we have
$$x \in \Gamma_n^{(i)} \text{~if and only if~}  x \in D_{i+1}' \cap \ldots \cap D_n'.$$
\end{remark}

From \eqref{curve}, we see that  \( x \in \Gamma_n^{(i)}\) implies that \( x\) is also in \(\Gamma_n^{(i+1)}\). On the other hand, let \(x \in X_n\), and let \(C\) be an irreducible and reduced curve containing $x$. Assume
that $x \,\notin\,\Gamma_n$. Then $C \,\ne \,\Gamma_n$. More generally, we have the following:
 
\begin{lemma}\label{containment}
For any \(2 \,\leq\, i \,\leq\, n\), if $x \,\in\, \Gamma_n^{(i)}\setminus \Gamma_n^{(i-1)}$,
then $C \,\not\subset\, D_i'$.
\end{lemma}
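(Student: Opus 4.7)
The plan is to invoke the alternative characterization of the curves $\Gamma_n^{(j)}$ given in Remark \ref{alt-char}, which reduces the lemma to a one-line incidence argument for curves and divisors.

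First, I would apply Remark \ref{alt-char} at the two consecutive indices $i$ and $i-1$. The hypothesis $x \in \Gamma_n^{(i)}$ translates into the condition $x \in D_{i+1}' \cap \ldots \cap D_n'$, while $x \notin \Gamma_n^{(i-1)}$ translates into $x \notin D_i' \cap D_{i+1}' \cap \ldots \cap D_n'$. Taking these together, the only way both can hold is if $x \notin D_i'$; in terms of the quotient coordinates this amounts to saying that $z_i^0 \neq 0$, as is visible from the coordinate description $D_i' = \mathbb{V}(z_i)$ recorded in \eqref{ref_div}.

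Once $x \notin D_i'$ has been established, the conclusion is immediate. Since $C$ is, by hypothesis, an irreducible and reduced curve containing the point $x$, and $D_i'$ is a divisor that does not contain $x$, the curve $C$ cannot lie inside $D_i'$.

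I do not expect any serious obstacle here: essentially all of the content has been absorbed into Remark \ref{alt-char}, and the lemma is a direct corollary of that characterization together with the trivial observation that a curve through $x$ is not contained in any divisor avoiding $x$. The only small point requiring care is to verify that the set-theoretic difference $\Gamma_n^{(i)} \setminus \Gamma_n^{(i-1)}$ really corresponds to adding the single condition $z_i^0 \neq 0$ on top of $z_{i+1}^0 = \ldots = z_n^0 = 0$, which is built into the statement of Remark \ref{alt-char} applied at the two indices.
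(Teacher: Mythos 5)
Your proposal is correct and follows essentially the same route as the paper: the paper's proof works directly with the coordinate description \eqref{curve} to deduce $z_i^0 \neq 0$, i.e.\ $x \notin D_i'$, and then concludes $C \not\subset D_i'$ since $x \in C$, which is exactly your argument repackaged through Remark \ref{alt-char} (itself an immediate consequence of \eqref{ref_div} and \eqref{curve}). No gap here.
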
	

\begin{proof}
Write \(x\,=\,[z_1^0:w_1^0 : \ldots : z_n^0:w_n^0] \,\in\, X_n\).
The condition that $x\, \in\, \Gamma_n^{(i)} $ implies that \(z_j^0\,=\,0\) for all
\(j \,\geq\, i+1\). Similarly $x \,\notin\, \Gamma_n^{(i-1)}$ implies that
\(z_j^0 \,\neq\, 0\) for some \(j\,\geq\, i\). Thus 
\(z_i^0 \,\neq\, 0\), i.e., \(x \,\notin\, D'_i\). Since \(x \,\in\,
C\), it follows that $C
\,\not\subset\, D_i'$.
\end{proof}	

\begin{lemma}\label{resfibre}
Let \(L \,\equiv\, (a_1,\, \cdots,\, a_n) \,\in\, {\rm Pic}(X_n)\). Then
$L|_{X_n^{(i)}} \,\equiv\, (a_i,\,\cdots,\, a_n)$ whenever \(2 \,\leq\, i \,\leq\, n\). 
\end{lemma}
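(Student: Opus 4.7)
The plan is to pin down the numerical class of $L|_{X_n^{(i)}}$ by intersecting against a dual basis of curves on $X_n^{(i)}$, leveraging the structure already established.

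By Proposition \ref{vertical-tower}, $X_n^{(i)}$ is itself a Bott tower of dimension $n-i+1$ with Bott numbers $\{c_{k,l}\}_{i\le k<l\le n}$. Under this identification, its Picard group is freely generated by the invariant prime divisors, which I will denote $\widetilde{D}_i,\ldots,\widetilde{D}_n$, corresponding to the vanishing loci of the coordinates $w_i,\ldots,w_n$. Applying Proposition \ref{dual-basis} to this smaller Bott tower at the point $x\in X_n^{(i)}$, I obtain curves $\widetilde{\Gamma}^{(i)},\ldots,\widetilde{\Gamma}^{(n)}$ in $X_n^{(i)}$ dual to $\widetilde{D}_i,\ldots,\widetilde{D}_n$.

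The key observation is that these dual curves on $X_n^{(i)}$ coincide with the original curves $\Gamma_n^{(j)}\subset X_n$ for $j\ge i$. Indeed, $\Gamma_n^{(j)}\subset X_n^{(j)}\subset X_n^{(i)}$ for $j\ge i$, and comparing the parametrization \eqref{curve} of $\Gamma_n^{(j)}$ with the analogous parametrization for the Bott tower $X_n^{(i)}$ shows $\widetilde{\Gamma}^{(j)}=\Gamma_n^{(j)}$ under the identification. By the projection formula applied to the closed immersion $X_n^{(i)}\hookrightarrow X_n$, for each $j\in\{i,\ldots,n\}$,
\[
\bigl(L|_{X_n^{(i)}}\bigr)\cdot\widetilde{\Gamma}^{(j)} \;=\; L\cdot\Gamma_n^{(j)} \;=\; a_j,
\]
where the last equality is Proposition \ref{dual-basis}. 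Since $\widetilde{\Gamma}^{(i)},\ldots,\widetilde{\Gamma}^{(n)}$ are dual to the generators $\widetilde{D}_i,\ldots,\widetilde{D}_n$ of $\text{Pic}(X_n^{(i)})$, these intersection numbers determine $L|_{X_n^{(i)}}\equiv a_i\widetilde{D}_i+\cdots+a_n\widetilde{D}_n\equiv(a_i,\ldots,a_n)$.

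The only real obstacle is the minor bookkeeping required to match $\Gamma_n^{(j)}$ with the corresponding dual curve on $X_n^{(i)}$, which follows at once from the explicit formula \eqref{curve}. An equivalent route would be to compute $D_k|_{X_n^{(i)}}$ directly: for $k<i$ the divisor $D_k=\mathbb{V}(w_k)$ is a pullback from $X_{i-1}$ under the projection whose fiber is $X_n^{(i)}$, so $D_k|_{X_n^{(i)}}\equiv 0$; for $k\ge i$ we have $D_k|_{X_n^{(i)}}=\widetilde{D}_k$ as both are cut out by the vanishing of the same coordinate $w_k$. Summing then gives the claim.
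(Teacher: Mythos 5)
Your argument is correct, but it takes a genuinely different route from the paper. You determine the class of $L|_{X_n^{(i)}}$ all at once by intersecting against a dual basis of curves: you identify $X_n^{(i)}$ with a smaller Bott tower (Proposition \ref{vertical-tower}), observe via \eqref{curve} that the curves $\Gamma_n^{(j)}$, $j\ge i$, based at $x$ are exactly the dual curves of Proposition \ref{dual-basis} for that subtower, and then use the projection formula together with $L\cdot\Gamma_n^{(j)}=a_j$ to read off the coefficients; since the $\widetilde D_j$ freely generate $\mathrm{Pic}(X_n^{(i)})$ and your curves are dual to them, this pins down the numerical class, which is all the lemma asserts. The paper instead computes the restriction of each generator $D_j$ to $X_n^{(i)}$ directly: for $j\ge i$ it identifies $D_j\cap X_n^{(i)}$ with the corresponding generator $D^{(i)}_{j-i+1}$ of the subtower, and for $j<i$ it shows $D_j|_{X_n^{(i)}}=0$ by induction on $j$, splitting into the cases $w_j^0\neq 0$ (empty intersection) and $w_j^0=0$ (using the relation \eqref{linequiv} and the inductive hypothesis). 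Your approach buys a uniform, computation-free argument that reuses Proposition \ref{dual-basis} twice, at the cost of invoking the projection formula and the curve/divisor duality on the subtower; the paper's is more hands-on but yields the slightly finer statement that each $D_j$ restricts to the expected divisor (respectively to zero), not just that $L$ restricts to the expected class. Your closing ``equivalent route'' is essentially the paper's proof for $k\ge i$, with the $k<i$ case replaced by the assertion that $D_k$ is pulled back from $X_{i-1}$ and that $X_n^{(i)}$ is a fiber of $X_n\to X_{i-1}$; that assertion is true (and arguably cleaner than the paper's induction), but as written it is unproved and deserves a line of justification, e.g.\ that $\mathbb{V}(w_k)\subset X_n$ is the reduced preimage of $\mathbb{V}(w_k)\subset X_{i-1}$ under the projection, which is immediate from the quotient construction.
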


\begin{proof}
Let \(x\,=\,[z_1^0:w_1^0 : \ldots : z_n^0:w_n^0] \,\in\, X_n\) be as before. Note that $$X_n^{(i)}\,=\,\left\{[z_1^0:w_1^0 : \cdots :
z_{i-1}^0:w_{i-1}^0:z_i:w_i: \cdots :z_n:w_n] \,\mid\, (z_l,w_l)
\,\in\, \C^2 \setminus 0\ \text{ for } i \leq l \leq n
\right\}$$ 
is isomorphic to $$\left\{[z_1':w_1': \cdots : z_{n-i+1}': w_{n-i+1}'] \,\mid\,
(z'_l,\,w'_l) \,\in \, \C^2 \setminus 0\ \text{ for } 1 \,\leq\, l\, \leq\,
n-i+1 \right\},$$ 
which is a Bott tower of dimension \(n-i+1\), where we have 
identified \(z_j'\) (respectively, \(w'_j\)) with \(z_{i-1+j}\) (respectively, $w_{i-1+j}$).
Note that \[\text{Pic}(X_n^{(i)})\,=\,\Z D^{(i)}_1\oplus \cdots\oplus \Z D^{(i)}_{n-i+1}\, ,\]
where \(D^{(i)}_j\,=\,\mathbb{V}(w'_{j})\) for $j\,=\,1,\, \cdots,\, n-i+1.$
	
Let \(i \,\leq\, j \,\leq\, n\), and consider
\begin{align*}
	D_j \cap X_n^{(i)} &=\{[z_1^0:w_1^0 : \ldots : z_{i-1}^0:w_{i-1}^0:z_i:w_i: \ldots :z_n:w_n] \in X_n \mid w_j=0\}\\
	&= \mathbb{V}(w'_{j-i+1}) \subseteq X_n^{(i)}\\
	&= D^{(i)}_{j-i+1}.
\end{align*}
	
Now using induction on \(j\), we show that \(D_j|_{X_n^{(i)}}=0\) for \(1 \leq j <i\). Note that 
	\begin{equation}\label{rest}
		D_j \cap  X_n^{(i)}=
		\begin{cases}
			\emptyset, &
			{\rm if}\ w_j^0 \neq 0, \\
			X_n^{(i)},  & {\rm if}\ w_j^0 = 0.
		\end{cases}
	\end{equation} 
	 First suppose that \(j=1\). From \eqref{rest}, if  $w_1^0
         \neq 0$ then \(D_1|_{X_n^{(i)}}=0\). So assume \(w_1^0 =
         0\). Then from \eqref{linequiv}, 
we have \(D_1 \sim_{\text{lin}} D'_1\). Since \(w_1^0 = 0\), we have
\(D'_{1}\cap  X_n^{(i)} = \emptyset\), which shows that
\(D_1|_{X_n^{(i)}}=0\). 
Now assume that \(D_l|_{X_n^{(i)}}=0\) for all \(l <j\). Again from
\eqref{rest}, if  $w_j^0 \neq 0$ then \(D_j|_{X_n^{(i)}}=0\).

So let \(w_j^0 = 0\). Then from \eqref{linequiv}, 
	\begin{equation}\label{linequiv2}
		D_j \sim_{\text{lin}} D'_{j}+c_{1, i}D_{1}+\ldots+c_{j-1, j}D_{j-1}.
	\end{equation}
	
Clearly, \( D'_{j}\cap  X_n^{(i)} = \emptyset\) as \(w_j^0 = 0\). Now
from \eqref{linequiv2} together with the induction hypothesis, it
follows that 
\(D_j|_{X_n^{(i)}}=0\). This completes the proof.
\end{proof}

\section{Seshadri constants}\label{sc}

In this section we prove our main theorem which determines the
Seshadri constants of nef line bundles on Bott towers. We follow
the notation developed in Section \ref{bott} about Bott towers. 

Let $n$ be a positive integer. We
consider Bott towers $$X_n \,\longrightarrow\,
 X_{n-1}\,\longrightarrow\, \ldots \,\longrightarrow\, X_2 \,\longrightarrow\, X_1
\,\longrightarrow\, X_0$$ of height $n$. Then $X_1 \,=\, \mathbb{P}^1$. If $L
\,=\,\mathcal{O}_{\mathbb{P}^1}(a)$ is a nef
line bundle on $\mathbb{P}^1$, then by convention, the Seshadri constants of $L$ are
given by
$\varepsilon(\mathbb{P}^1,\mathcal{O}_{\mathbb{P}^1}(a),x)\,=\,a$
for every $x \,\in\, \mathbb{P}^1$. 

\begin{theorem}\label{main}
Let $n$ be a positive integer. 
Let $$X_n \,\longrightarrow\,
 X_{n-1}\,\longrightarrow\, \ldots \,\longrightarrow\, X_2 \,\longrightarrow\, X_1
\,\longrightarrow\, X_0$$ be a Bott
tower with positive Bott numbers. Take any $x \,\in\, X_n$.  Let $X_n^{(2)}$ be the subvariety of $X_n$ defined in Figure
\ref{diag1} and let $\Gamma_n \subset X_n$ be the rational curve defined in Figure
\ref{diag2}. 
Let $L \,\equiv\, a_1 D_1 + \ldots + a_n D_n$ be a
nef line bundle on $X_n$. Then the Seshadri constants of $L$ are given by the
following. 
\begin{equation*}
    \varepsilon(X_n,L,x) =
    \begin{cases}
      {\rm min}\{a_1,\varepsilon(X_n^{(2)},L|_{X_n^{(2)}},x)\}, &
      {\rm if}\ x\in \Gamma_n, \\
      \varepsilon(X_n^{(2)},L|_{X_n^{(2)}},x),  & {\rm if}\ x\notin \Gamma_n.
    \end{cases}
  \end{equation*} 
\end{theorem}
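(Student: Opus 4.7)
The plan is to prove both inequalities $\varepsilon(X_n,L,x) \le \mathrm{RHS}$ and $\varepsilon(X_n,L,x) \ge \mathrm{RHS}$ separately, using test curves whose intersection numbers are controlled by the dual basis structure of Proposition \ref{dual-basis}. For the upper bound, any irreducible curve $C \subset X_n^{(2)}$ through $x$ has $L \cdot C = L|_{X_n^{(2)}} \cdot C$ and the same multiplicity at $x$ viewed in $X_n$ or $X_n^{(2)}$, hence $\varepsilon(X_n,L,x) \le \varepsilon(X_n^{(2)}, L|_{X_n^{(2)}}, x)$ holds unconditionally. When $x \in \Gamma_n$, the smooth rational curve $\Gamma_n$ provides an additional test curve with $L \cdot \Gamma_n = a_1$ and $\mathrm{mult}_x \Gamma_n = 1$ by Proposition \ref{dual-basis}, yielding the extra bound $\varepsilon(X_n,L,x) \le a_1$.

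For the lower bound I would take an arbitrary irreducible reduced curve $C$ through $x$, write $m := \mathrm{mult}_x C$, and split on whether $C$ lies in $X_n^{(2)}$. If $C \subset X_n^{(2)}$, then $L \cdot C / m = L|_{X_n^{(2)}} \cdot C / m \ge \varepsilon(X_n^{(2)}, L|_{X_n^{(2)}}, x)$ directly from the definition of the Seshadri constant on $X_n^{(2)}$, which dominates the right-hand side in both cases of the theorem. If instead $C \not\subset X_n^{(2)}$, the key move is that $X_n^{(2)}$ is smooth (being itself a Bott tower by Proposition \ref{vertical-tower}) and numerically equivalent to $D_1$ (Proposition \ref{fibre}), so the local intersection at $x$ gives $D_1 \cdot C = X_n^{(2)} \cdot C \ge m$. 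Since every $D_j$ is nef and each $a_j \ge 0$, one obtains $L \cdot C \ge a_1 D_1 \cdot C \ge a_1 m$. This already suffices when $x \in \Gamma_n$, because the right-hand side of the theorem is then $\min\{a_1,\varepsilon(X_n^{(2)},L|_{X_n^{(2)}},x)\} \le a_1$.

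The main obstacle is the remaining subcase: $C \not\subset X_n^{(2)}$ and $x \notin \Gamma_n$. Here the bound $L \cdot C \ge a_1 m$ is too weak, since $\varepsilon(X_n^{(2)}, L|_{X_n^{(2)}}, x)$ may exceed $a_1$. To overcome this I would introduce $i_x := \min\{i : x \in \Gamma_n^{(i)}\}$; the hypothesis $x \notin \Gamma_n$ forces $i_x \ge 2$ and $x \in \Gamma_n^{(i_x)} \setminus \Gamma_n^{(i_x-1)}$, so Lemma \ref{containment} gives $C \not\subset D_{i_x}'$ and therefore $D_{i_x}' \cdot C \ge 0$. Using the linear equivalence $D_{i_x} \sim_{\mathrm{lin}} D_{i_x}' + \sum_{k<i_x} c_{k,i_x} D_k$ from \eqref{linequiv}, the bound $D_1 \cdot C \ge m$ established above, nefness of the remaining $D_k$, and the positivity $c_{1,i_x} \ge 1$ of the Bott numbers, I obtain
\[ D_{i_x} \cdot C \,\ge\, c_{1,i_x}\,D_1 \cdot C \,\ge\, m, \]
so $L \cdot C \ge a_{i_x} D_{i_x} \cdot C \ge a_{i_x} m$. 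Finally, the curve $\Gamma_n^{(i_x)} \subset X_n^{(i_x)} \subset X_n^{(2)}$ passes through $x$ with $L \cdot \Gamma_n^{(i_x)} = a_{i_x}$ (Proposition \ref{dual-basis}) and multiplicity one at $x$, which shows $\varepsilon(X_n^{(2)}, L|_{X_n^{(2)}}, x) \le a_{i_x}$. Combining, $L \cdot C / m \ge a_{i_x} \ge \varepsilon(X_n^{(2)}, L|_{X_n^{(2)}}, x)$, which is exactly the required lower bound. The places where positivity of the Bott numbers and the structural Lemma \ref{containment} enter are precisely the heart of the argument.
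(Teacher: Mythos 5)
Your proposal is correct, and it uses the same basic toolkit as the paper (B\'ezout against $X_n^{(2)}\equiv D_1$ from Proposition \ref{fibre}, Lemma \ref{containment} applied to the minimal $i$ with $x\in\Gamma_n^{(i)}$, the relations \eqref{linequiv}, and the duality of Proposition \ref{dual-basis}), but it is organized differently in one genuine respect: you avoid the induction on the height of the tower. The paper proves the case $x\notin\Gamma_n$ (Proposition \ref{case2}) by applying the theorem inductively to the shorter tower $X_n^{(2)}\to\cdots\to X_2^{(2)}$, unwinding $\varepsilon(X_n^{(2)},L|_{X_n^{(2)}},x)=\cdots=\varepsilon(X_n^{(i)},L|_{X_n^{(i)}},x)\le a_i$, and only then comparing with the estimate $\frac{L\cdot C}{m}\ge a_i$ for curves $C\not\subset X_n^{(2)}$. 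You instead get the needed inequality $\varepsilon(X_n^{(2)},L|_{X_n^{(2)}},x)\le a_{i_x}$ in one stroke, by observing that the smooth rational curve $\Gamma_n^{(i_x)}$ lies inside $X_n^{(2)}$ (since $i_x\ge 2$), passes through $x$ by the choice of $i_x$, and has $L|_{X_n^{(2)}}\cdot\Gamma_n^{(i_x)}=L\cdot\Gamma_n^{(i_x)}=a_{i_x}$ by Proposition \ref{dual-basis}; likewise you replace the paper's Mori-cone decomposition $C\equiv\sum p_j\Gamma_n^{(j)}$ by directly invoking nefness of each $D_j$, which is equivalent. What the direct route buys is a shorter, induction-free proof of the recursion stated in Theorem \ref{main}; what the paper's inductive route buys is that it simultaneously produces the unwound formula of Corollary \ref{cor1}(1), which your argument would recover afterwards by iterating the recursion. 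The only loose end is the degenerate case $n=1$ (where $X_1^{(2)}$ is undefined and your "unconditional" upper bound $\varepsilon(X_n,L,x)\le\varepsilon(X_n^{(2)},L|_{X_n^{(2)}},x)$ has no meaning); as in Remark \ref{n=1}, this case is handled by the convention for $\mathbb{P}^1$ and does not affect your argument for $n\ge 2$.
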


\begin{remark}\label{alt-main}
By Remark \ref{alt-char},  the conclusion of Theorem \ref{main} can re-phrased as
\begin{equation*}
    \varepsilon(X_n,L,x) =
    \begin{cases}
      {\rm min}\{a_1,\varepsilon(X_n^{(2)},L|_{X_n^{(2)}},x)\}, &
      {\rm if}\ x\in D_2' \cap \ldots \cap D_n', \\
      \varepsilon(X_n^{(2)},L|_{X_n^{(2)}},x),  & {\rm if}\ x\notin D_2' \cap \ldots \cap D_n'.
    \end{cases}
  \end{equation*} 
\end{remark}
\begin{remark}\label{n=1}
In Section \ref{bott} we defined subvarieties $X_n^{(i)}$ of $X_n$
for every $i \,\le\, n$; see Figure \ref{diag1}. So $X_n^{(i)}$ is not
defined when $i\, >\, n$. When $n\,=\,1$, we note that $X_1\, =\, \Gamma_1$. Therefore,
Theorem \ref{main} is to be interpreted as
follows when $n\,=\,1$: Since $X_1^{(2)}$ is not defined,
the theorem asserts that 
$\varepsilon(\mathbb{P}^1,\mathcal{O}_{\mathbb{P}^1}(a),x)\,=\,a$ for all
$x\, \in \,\mathbb{P}^1$. This
holds by the convention on Seshadri constants for nef line bundles
on $\mathbb{P}^1$. 
\end{remark}

We will prove Theorem \ref{main} in this section. 
We start with some propositions. 

\begin{proposition}\label{gen-ineq} With the notation as in Theorem
  \ref{main},
$$\varepsilon(L,x) \,\ge\,  {\rm
  min}\{a_1,\,\varepsilon(X_n^{(2)},\,L|_{X_n^{(2)}},x)\}$$
for all $x \,\in\, X_n$.
\end{proposition}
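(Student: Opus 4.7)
The plan is to bound the Seshadri quotient $\frac{L\cdot C}{\mathrm{mult}_x C}$ for every irreducible reduced curve $C\subset X_n$ passing through $x$ by the minimum on the right side, and then take the infimum. The proof naturally splits into two cases according to whether $C$ lies in the divisor $X_n^{(2)}$ or not. The two key inputs are Proposition \ref{fibre}, which gives $X_n^{(2)}\equiv D_1$, and the fact that $X_n^{(2)}$ is itself a smooth Bott tower (Proposition \ref{vertical-tower}) containing the point $x$ by construction.

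\textbf{Case 1:} $C\subset X_n^{(2)}$. Here I would simply observe that $L\cdot C=L|_{X_n^{(2)}}\cdot C$, while $\mathrm{mult}_x C$ depends only on the local ring of $C$ at $x$ and is unchanged when we view $C$ inside $X_n^{(2)}$. Hence $\frac{L\cdot C}{\mathrm{mult}_x C}\ge \varepsilon(X_n^{(2)},L|_{X_n^{(2)}},x)$ directly from the definition of Seshadri constant on $X_n^{(2)}$.

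\textbf{Case 2:} $C\not\subset X_n^{(2)}$. The idea is to show $\frac{L\cdot C}{\mathrm{mult}_x C}\ge a_1$. Writing $L\equiv a_1D_1+\cdots+a_nD_n$ with all $a_i\ge 0$ (by the nef criterion recalled in Section \ref{bott}) and using that each $D_i$ is nef, so $D_i\cdot C\ge 0$, one obtains $L\cdot C\ge a_1(D_1\cdot C)$. Since $X_n^{(2)}\equiv D_1$ by Proposition \ref{fibre}, it suffices to prove
\[
X_n^{(2)}\cdot C \,\ge\, \mathrm{mult}_x C.
\]
This is the classical local intersection inequality: when $D$ is a Cartier divisor smooth at $x$ (in particular $\mathrm{mult}_x D=1$) and $C$ is an irreducible curve through $x$ with $C\not\subset D$, then the local intersection multiplicity at $x$ satisfies $(D\cdot C)_x\ge \mathrm{mult}_x C$, and the other intersection contributions are nonnegative. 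Applying this with $D=X_n^{(2)}$ gives the desired bound, and hence $\frac{L\cdot C}{\mathrm{mult}_x C}\ge a_1$.

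Combining both cases, every irreducible reduced curve $C$ through $x$ satisfies $\frac{L\cdot C}{\mathrm{mult}_x C}\ge \min\{a_1,\varepsilon(X_n^{(2)},L|_{X_n^{(2)}},x)\}$, and taking the infimum yields the claim. The main (minor) obstacle is justifying cleanly that the smoothness of $X_n^{(2)}$ at $x$ forces $X_n^{(2)}\cdot C\ge \mathrm{mult}_x C$; this rests on the numerical equivalence $X_n^{(2)}\equiv D_1$ from Proposition \ref{fibre} together with the standard fact that a smooth divisor intersects a curve not contained in it with local multiplicity at least that of the curve at the common point.
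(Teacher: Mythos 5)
Your proposal is correct and follows essentially the same route as the paper: the same dichotomy on whether $C\subset X_n^{(2)}$, the same use of $X_n^{(2)}\equiv D_1$ (Proposition \ref{fibre}) together with a B\'ezout-type local multiplicity bound to get $C\cdot D_1\ge \mathrm{mult}_x C$ when $C\not\subset X_n^{(2)}$, and the definition of the Seshadri constant on $X_n^{(2)}$ otherwise. The only cosmetic difference is that you discard the terms $a_iD_i\cdot C$ for $i\ge 2$ by nefness of each $D_i$, whereas the paper writes $C\equiv p_1\Gamma_n+\cdots+p_n\Gamma_n^{(n)}$ with $p_i\ge 0$ via the dual basis of Proposition \ref{dual-basis}; these are equivalent ways of making the same estimate.
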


\begin{proof}
Let $C \,\subset\, X_n$ be an irreducible and reduced curve such that $m\,:=\,
\text{mult}_x(C) \,>\, 0$. Write $C \,=\, p_1 \Gamma_n +
p_2\Gamma_n^{(2)}+\ldots + p_n\Gamma_n^{(n)}$, for some non-negative
integers $p_1,\, \ldots ,\, p_n$. 

First suppose that $C \,\not\subset\, X_n^{(2)}$. Then, using Proposition
\ref{fibre},  we have 
$$C \cdot X_n^{(2)} \,=\, C \cdot D_1 \,\ge\, m (\text{mult}_x(X_n^{(2)})) \,\ge\, m,$$ by B\'ezout's
theorem. Proposition \ref{dual-basis} implies  $C \cdot D_1 \,=\,
p_1$. So $p_1 \,\ge\, m$. Thus 
$$\frac{L\cdot C}{m} \,=\, \frac{a_1p_1+\ldots+a_np_n}{m} \,\ge\, a_1.$$ 

Next suppose that $C\,\subset\, X_n^{(2)}$. Then from the definition of 
$\varepsilon(L|_{X_n^{(2)}},x)$ it follows that 
$$\frac{L\cdot C}{m} \,=\, \frac{L|_{X_n^{(2)}} \cdot C}{m} \,\ge\, \varepsilon(L|_{X_n^{(2)}},\,x).$$

Consequently, $\frac{L\cdot C}{m} \,\ge\,
\text{min}\{a_1,\varepsilon(L|_{X_n^{(2)}},x)\}$ for all irreducible
and reduced curves $C \,\subset\, X_n$. The proposition follows. 
\end{proof}

\begin{proposition} \label{case1}
For $n\, >\, 0$, let $$X_n \,\longrightarrow\,
 X_{n-1}\,\longrightarrow\, \ldots \,\longrightarrow\, X_2 \,\longrightarrow\, X_1
\,\longrightarrow\, X_0$$ be a Bott
tower with positive Bott numbers. Let $L \,\equiv\, a_1 D_1 + \ldots +
a_n D_n$ be a nef line bundle
on $X_n$. Take any $x \,\in\, \Gamma_n\, :=\, \Gamma_n^{(1)}$ $($see \eqref{gni}$)$. Then
$$\varepsilon(L,x) \,=\, {\rm
  min}\{a_1,\varepsilon(X_n^{(2)},\,L|_{X_n^{(2)}},x)\}.$$ 
\end{proposition}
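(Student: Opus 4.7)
The plan is to combine the lower bound from Proposition \ref{gen-ineq} with two complementary upper bounds obtained by exhibiting suitable test curves through $x$.

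By Proposition \ref{gen-ineq}, we already have
\[
\varepsilon(L,x) \,\ge\, \min\{a_1,\,\varepsilon(X_n^{(2)},L|_{X_n^{(2)}},x)\},
\]
so the whole task reduces to showing the reverse inequality. I will prove the two upper bounds $\varepsilon(L,x) \le a_1$ and $\varepsilon(L,x) \le \varepsilon(X_n^{(2)},L|_{X_n^{(2)}},x)$ separately; taking their minimum then gives the result.

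For the first upper bound, I will use the curve $\Gamma_n = \Gamma_n^{(1)}$ itself as a test curve. The hypothesis $x \in \Gamma_n$ is what makes this allowed. Since $\Gamma_n \cong \mathbb{P}^1$ is smooth at $x$, we have $\mathrm{mult}_x(\Gamma_n) = 1$. By Proposition \ref{dual-basis}, $D_i \cdot \Gamma_n = \delta_{i1}$, so $L \cdot \Gamma_n = a_1$. Hence
\[
\varepsilon(L,x) \,\le\, \frac{L\cdot \Gamma_n}{\mathrm{mult}_x(\Gamma_n)} \,=\, a_1.
\]

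For the second upper bound, note first that $x \in X_n^{(2)}$ by the very definition of $X_n^{(2)}$ as the fibre through $x$ of $X_n \to X_1$. Any irreducible reduced curve $C \subset X_n^{(2)}$ with $x \in C$ is automatically an irreducible reduced curve in $X_n$ passing through $x$, with the same multiplicity at $x$, and by the projection formula $L \cdot C = L|_{X_n^{(2)}} \cdot C$. Taking the infimum over all such curves $C$ yields
\[
\varepsilon(L,x) \,\le\, \varepsilon(X_n^{(2)}, L|_{X_n^{(2)}}, x).
\]

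Combining these two upper bounds gives $\varepsilon(L,x) \le \min\{a_1,\varepsilon(X_n^{(2)},L|_{X_n^{(2)}},x)\}$, which together with Proposition \ref{gen-ineq} completes the proof. There is no real obstacle here: once Proposition \ref{gen-ineq}, Proposition \ref{dual-basis} and Proposition \ref{fibre} are in hand, the argument in the case $x \in \Gamma_n$ is almost immediate, the crucial observation being that $x$ simultaneously lies on the curve $\Gamma_n$ (which realizes the bound $a_1$) and on the divisor $X_n^{(2)}$ (which isolates the remaining directions).
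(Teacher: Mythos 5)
Your proposal is correct and follows essentially the same route as the paper: the lower bound comes from Proposition \ref{gen-ineq}, the upper bound $a_1$ from using the smooth rational curve $\Gamma_n$ through $x$ as a test curve, and the upper bound $\varepsilon(X_n^{(2)},L|_{X_n^{(2)}},x)$ from the observation that every curve in $X_n^{(2)}$ through $x$ also computes the Seshadri quotient on $X_n$. The only cosmetic difference is that you invoke Proposition \ref{fibre}, which is not actually needed here since $x\in X_n^{(2)}$ holds by the very definition of $X_n^{(2)}$ as the fibre through $x$.
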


\begin{proof}
We know that $\Gamma_n$ is a smooth rational curve. Since $x \,\in\,
\Gamma_n$, we have $\varepsilon(L,x) \le L\cdot \Gamma_n \,=\, a_1$. 

On the other hand, we always have 
$$\varepsilon(L,x) \,=\, \inf\limits_{\substack{x \in C \subset X_n}} \frac{L\cdot
C}{{\rm mult}_{x}C} \,\le \,\inf\limits_{\substack{x \in C\subset X_n^{(2)}}} \frac{L\cdot
C}{{\rm mult}_{x}C} \,=\, \varepsilon(L|_{X_n^{(2)}},x).$$

So  $\varepsilon(L,x) \,\le\,  {\rm
  min}\{a_1,\varepsilon(X_n^{(2)},\,L|_{X_n^{(2)}},x)\}$ and the
proposition follows from Proposition \ref{gen-ineq}. 
\end{proof}

\begin{lemma}\label{lemma} With the notation as in Theorem
  \ref{main}, suppose that $$\frac{L\cdot C}{{\rm mult}_{x}C} \,\ge\,
\varepsilon(X_n^{(2)},\,L|_{X_n^{(2)}},x)$$ 
for all irreducible and reduced curves $C \,\not\subset\, X_n^{(2)}$. Then 
$\varepsilon(L,x) \,=\, \varepsilon(X_n^{(2)},\,L|_{X_n^{(2)}},x)$.  
\end{lemma}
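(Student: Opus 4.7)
The plan is to split the infimum defining $\varepsilon(L,x)$ according to whether the test curve lies inside the subvariety $X_n^{(2)}$ or not, and show the two pieces each equal $\varepsilon(X_n^{(2)},L|_{X_n^{(2)}},x)$ (or are bounded below by it).

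First I would note that $x \in X_n^{(2)}$ by construction (see the paragraph defining $X_n^{(i)}$ in Section \ref{bott}), so it makes sense to talk about $\varepsilon(X_n^{(2)},L|_{X_n^{(2)}},x)$. For the inequality $\varepsilon(L,x) \le \varepsilon(X_n^{(2)},L|_{X_n^{(2)}},x)$, the key observation is that every irreducible reduced curve $C \subset X_n^{(2)}$ passing through $x$ is also an irreducible reduced curve in $X_n$ through $x$. The projection formula gives $L \cdot C = L|_{X_n^{(2)}} \cdot C$, and the multiplicity $\mathrm{mult}_x C$ is intrinsic to $C$ and does not depend on the ambient variety. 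Hence taking the infimum in $X_n$ over a larger collection of curves can only decrease the value, so
\[
\varepsilon(L,x) = \inf_{x \in C \subset X_n} \frac{L\cdot C}{\mathrm{mult}_x C} \le \inf_{x \in C \subset X_n^{(2)}} \frac{L|_{X_n^{(2)}}\cdot C}{\mathrm{mult}_x C} = \varepsilon(X_n^{(2)},L|_{X_n^{(2)}},x).
\]

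For the reverse inequality, I would split the defining infimum of $\varepsilon(L,x)$ according to whether $C \subset X_n^{(2)}$ or $C \not\subset X_n^{(2)}$. For curves contained in $X_n^{(2)}$, the same projection formula argument shows the ratio equals $\frac{L|_{X_n^{(2)}}\cdot C}{\mathrm{mult}_x C} \ge \varepsilon(X_n^{(2)},L|_{X_n^{(2)}},x)$ by definition. For curves not contained in $X_n^{(2)}$, the hypothesis of the lemma provides exactly the bound $\frac{L\cdot C}{\mathrm{mult}_x C} \ge \varepsilon(X_n^{(2)},L|_{X_n^{(2)}},x)$. Taking the infimum over all irreducible reduced curves through $x$ yields $\varepsilon(L,x) \ge \varepsilon(X_n^{(2)},L|_{X_n^{(2)}},x)$.

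Combining the two inequalities gives the claimed equality. This argument is essentially bookkeeping; there is no real obstacle, since the hypothesis was tailored precisely to control the "bad" curves $C \not\subset X_n^{(2)}$, and the case $C \subset X_n^{(2)}$ is handled by the definition of the restricted Seshadri constant.
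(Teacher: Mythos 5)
Your proposal is correct and follows essentially the same argument as the paper: the hypothesis handles curves $C \not\subset X_n^{(2)}$, the definition of the restricted Seshadri constant handles curves $C \subset X_n^{(2)}$, giving $\varepsilon(L,x) \ge \varepsilon(X_n^{(2)},L|_{X_n^{(2)}},x)$, and the reverse inequality holds because every curve in $X_n^{(2)}$ through $x$ is also a curve in $X_n$. The paper's proof is just a more compact version of the same bookkeeping.
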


\begin{proof}
For any irreducible and reduced curve $C \,\subset\, X_n^{(2)}$, we have 
\begin{equation}\label{c1}
\frac{L\cdot C}{{\rm mult}_{x}C} \,\ge\,
\varepsilon(X_n^{(2)},\,L|_{X_n^{(2)}},x)
\end{equation}
by the definition of Seshadri
constants. So by the given hypothesis, \eqref{c1} holds for all
curves in $X_n$. So $\varepsilon(L,x) \,\ge\,
\varepsilon(X_n^{(2)},\,L|_{X_n^{(2)}},x)$. Since the opposite
inequality $\varepsilon(L,x) \,\leq\,
\varepsilon(X_n^{(2)},\,L|_{X_n^{(2)}},x)$ holds, the lemma is proved. 
\end{proof}

Next we consider the second case of Theorem \ref{main}. Note that if
$x \,\notin\, \Gamma_n$, then $n\,\ge\, 2$. 

\begin{proposition}\label{case2}
Let $n \ge 2$ be an integer. 
Let $$X_n \,\longrightarrow\,
 X_{n-1}\,\longrightarrow\, \ldots \,\longrightarrow\, X_2 \,\longrightarrow\, X_1
\,\longrightarrow\, X_0$$ be a Bott
tower with positive Bott numbers. Let $L\, \equiv\, a_1 D_1 + \ldots +
a_n D_n$ be a nef line bundle
on $X_n$. Take $x \,\in\, X_n$ such that $x \,\notin\, \Gamma_n$. Then 
$\varepsilon(X_n,\,L,x) \,=\, \varepsilon(X_n^{(2)},\,L|_{X_n^{(2)}},x)$. 
\end{proposition}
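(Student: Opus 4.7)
My plan is to invoke Lemma \ref{lemma}, thereby reducing the statement to the inequality
\[
\frac{L \cdot C}{m} \,\ge\, \varepsilon_2\,:=\,\varepsilon(X_n^{(2)}, L|_{X_n^{(2)}}, x)
\]
for every irreducible, reduced curve $C \not\subset X_n^{(2)}$ passing through $x$, where $m\,:=\,{\rm mult}_x C$. Writing $C \,\equiv\, p_1 \Gamma_n + p_2 \Gamma_n^{(2)} + \cdots + p_n \Gamma_n^{(n)}$ in the basis of $\overline{\rm NE}(X_n)$ produced by Proposition \ref{dual-basis}, one has $L \cdot C = \sum_{i=1}^n a_i p_i$ and $D_j \cdot C = p_j$, while the argument of Proposition \ref{gen-ineq} (using $X_n^{(2)} \,\equiv\, D_1$ from Proposition \ref{fibre}) already yields $p_1 \ge m$.

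Next I would exploit the hypothesis $x \notin \Gamma_n$ to single out the index that controls the bound. Since $x \in \Gamma_n^{(n)} = X_n^{(n)}$ but $x \notin \Gamma_n^{(1)} = \Gamma_n$, there is a minimal index $i_0 \in \{2, \ldots, n\}$ with $x \in \Gamma_n^{(i_0)}$. The curve $\Gamma_n^{(i_0)}$ is a smooth $\mathbb{P}^1$ contained in $X_n^{(i_0)} \subset X_n^{(2)}$ and passing through $x$, and by Proposition \ref{dual-basis} it satisfies $L \cdot \Gamma_n^{(i_0)} = a_{i_0}$; testing $\varepsilon_2$ against this curve inside $X_n^{(2)}$ gives $\varepsilon_2 \le a_{i_0}$.

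It then suffices to prove $p_{i_0} \ge m$, for then $L \cdot C \,\ge\, a_{i_0} p_{i_0} \,\ge\, a_{i_0} m \,\ge\, \varepsilon_2 m$. Minimality of $i_0$ places $x$ in $\Gamma_n^{(i_0)} \setminus \Gamma_n^{(i_0-1)}$, so Lemma \ref{containment} forces $C \not\subset D_{i_0}'$ and hence $D_{i_0}' \cdot C \ge 0$. The linear equivalence $D_{i_0}' \,\sim_{\text{lin}}\, D_{i_0} - c_{1, i_0} D_1 - \cdots - c_{i_0-1, i_0} D_{i_0-1}$ from \eqref{linequiv} converts this into
\[
p_{i_0} \,\ge\, c_{1, i_0} p_1 + \cdots + c_{i_0-1, i_0} p_{i_0-1} \,\ge\, c_{1, i_0} p_1 \,\ge\, p_1 \,\ge\, m,
\]
the penultimate step being where the positivity of the Bott numbers enters decisively.

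The main obstacle, I expect, is the dual use of $\Gamma_n^{(i_0)}$: the same curve must simultaneously witness the upper bound $\varepsilon_2 \le a_{i_0}$ inside the fiber $X_n^{(2)}$ and, through Lemma \ref{containment}, trigger the lower bound $p_{i_0} \ge m$ for the original curve $C$ in $X_n$. Both roles follow cleanly from the definition of $i_0$ and the structure developed in Section \ref{bott}, but the argument would collapse without the positivity of the Bott numbers at the step $c_{1, i_0} p_1 \ge p_1$.
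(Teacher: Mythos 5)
Your proposal is correct, and it reaches the conclusion by a route that is partly different from the paper's. The core curve estimate is the same: for an irreducible $C\not\subset X_n^{(2)}$ through $x$ you write $C\equiv p_1\Gamma_n+\cdots+p_n\Gamma_n^{(n)}$, get $p_1\ge m$ from B\'ezout and $X_n^{(2)}\equiv D_1$ (Propositions \ref{fibre} and \ref{gen-ineq}), take the minimal $i_0$ with $x\in\Gamma_n^{(i_0)}$, and use Lemma \ref{containment} together with \eqref{linequiv} and the positivity of the Bott numbers to get $p_{i_0}\ge p_1\ge m$, hence $\frac{L\cdot C}{m}\ge a_{i_0}$; this is exactly the content of the two inner lemmas in the paper's proof. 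Where you genuinely diverge is in handling the other side of the comparison: the paper proves the proposition by induction on $n$ (base case $n=2$ done by hand, then a case split on whether $x\in\Gamma_n^{(2)}$, using the induction hypothesis plus Proposition \ref{case1} to evaluate $\varepsilon(X_n^{(2)},L|_{X_n^{(2)}},x)$ and in particular to deduce $\varepsilon(X_n^{(2)},L|_{X_n^{(2)}},x)\le a_i$), whereas you obtain $\varepsilon(X_n^{(2)},L|_{X_n^{(2)}},x)\le a_{i_0}$ directly by testing against the smooth rational curve $\Gamma_n^{(i_0)}\subset X_n^{(i_0)}\subset X_n^{(2)}$ through $x$, which has $L\cdot\Gamma_n^{(i_0)}=a_{i_0}$ by Proposition \ref{dual-basis}. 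This makes your argument non-inductive and uniform in $n$ (the base case and the two cases of the paper collapse into one), at the cost of only producing the inequality needed for Lemma \ref{lemma}; the paper's inductive formulation is heavier but computes the restricted Seshadri constant recursively along the way, which is what Theorem \ref{main} and Corollaries \ref{cor1}--\ref{cor2} then exploit.
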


\begin{proof}
We prove this by using induction on $n$. First set $n=2$. This implies that 
$X_2^{(2)} \,=\, \mathbb{P}^1$ and $L|_{X_2^{(2)}}\, =\, (a_2)$ by Lemma \ref{resfibre}. 
Hence $\varepsilon(X_n^{(2)},\,L|_{X_n^{(2)}},x)\,=\,a_2$. 
So we need to prove that 
$\varepsilon(X_2,\,L,x) \,=\, a_2$, under the assumption that $x \notin
\Gamma_2$.

Let $C \,=\, p_1\Gamma_2+p_2\Gamma_2^{(2)} \,\subset\, X_2$ be an irreducible
and reduced curve for some non-negative
integers $p_1, p_2$ such that $m\,:=\,{\rm mult}_x(C) \,>\, 0$. 
Note that $x \,\in\, \Gamma_2^{(2)} \,=\, X_2^{(2)}$. Hence 
$x \,\in\,\Gamma_2^{(2)} \setminus \Gamma_2$ and we have  
$C \,\not\subset\, D_2'$ by Lemma \ref{containment}. 
By \eqref{linequiv}, $$D_2' \,\sim_{\text{lin}}\, D_2-c_{1,2}D_1\, ,$$ where
$c_{1,2}$ is a positive integer. Hence  
$$0 \,\le\, C\cdot D_2' \,=\, C\cdot(D_2-c_{1,2}D_1) \,=\, p_2-c_{1,2}p_1\, .$$ So $p_2
\,\ge\, p_1$. 

Now suppose that $C \,\ne \, \Gamma_2^{(2)} \,=\, X_2^{(2)}$. Then 
B\'ezout's theorem and Proposition \ref{fibre} together give that
$m \le C \cdot X_2^{(2)} = C \cdot D_1 = p_1.$
Hence $$\frac{L\cdot C}{m} \,=\, \frac{a_1p_1+a_2p_2}{m} \,\ge\, a_2.$$ 

On the other hand, since $x \,\in\, \Gamma_2^{(2)}$, it contributes to the
Seshadri constant of $L$ at $x$. So $\varepsilon(X_2,L,x) \,\le\,
L \cdot \Gamma_2^{(2)} \,=\, a_2$. Since 
$\frac{L\cdot C}{m} \,\ge\, a_2$ for every curve $C \,\ne\, \Gamma_2^{(2)}$, it
follows that $\varepsilon(X_2,L,x)\,=\,a_2$. 

Now assume that
\begin{itemize}
\item  $n \,\ge\, 3$, and

\item the proposition holds for all Bott towers of height at most $n-1$.  
\end{itemize}

Note that $L|_{X_n^{(2)}} \,\equiv\, (a_2,\,\cdots,\,a_n)$ by Lemma \ref{resfibre}. 
By the induction hypothesis and Proposition \ref{case1}, we know that
\begin{equation*}
    \varepsilon(X_n^{(2)},\,L|_{X_n^{(2)}},x) \,=\,
    \begin{cases}
      {\rm min}\{a_2,\varepsilon(X_n^{(3)},\,L|_{X_n^{(3)}},x)\}, &
      {\rm if}\ x\,\in \,\Gamma_n^{(2)}, \\
      \varepsilon(X_n^{(3)},L|_{X_n^{(3)}},\, x),  & {\rm if}\ x\,\notin\, \Gamma_n^{(2)}.
    \end{cases}
  \end{equation*} 

\textbf{Case 1}: $x\,\in\, \Gamma_n^{(2)}$. 

In this case, $\varepsilon(X_n^{(2)},\,L|_{X_n^{(2)}},x) \,=\,  {\rm
  min}\{a_2,\,\varepsilon(X_n^{(3)}\,,L|_{X_n^{(3)}},\,x)\} \,\le\, a_2$.

\begin{lemma} Let $C \,\subset\, X_n$ be an irreducible and reduced
curve such that $C \,\not\subset\, X_n^{(2)}$ and $m\,:=\, \text{mult}_x(C)\, >\,
0$. Then $\frac{L\cdot C}{m}\, \ge\, a_2$. 
\end{lemma}
\begin{proof}
 Write $C \,=\, p_1 \Gamma_n +
p_2\Gamma_n^{(2)}+\ldots + p_n\Gamma_n^{(n)}$ for some non-negative
integers $p_1,\, \cdots ,\, p_n$. 

Since $x \,\in\, \Gamma_n^{(2)} \setminus \Gamma_n$, we
have $C \,\not\subset\, D_2'$ by Lemma \ref{containment}. From
\eqref{linequiv} we know that $D_2' \,\equiv\, D_2 - c_{1,2} D_1$ for
a positive integer $c_{1,2}$. Hence $0 \,\le\, C \cdot D_2' \,=\,
p_2-c_{1,2}p_1$. This in turn implies $p_2 \,\ge\, p_1$. 

On the other hand, since $C \,\not\subset\, X_n^{(2)}$, we have $p_1 \,\ge\,
m$ (see the proof of Proposition \ref{gen-ineq}). So
$$\frac{L\cdot C}{m} \,=\, \frac{a_1p_1+a_2p_2+\ldots+a_np_n}{m} \,\ge\,
a_2.$$ Hence the lemma is proved. 
\end{proof}

So $\frac{L\cdot C}{m} \,\ge\, a_2 \,\ge\,
\varepsilon(X_n^{(2)},\,L|_{X_n^{(2)}},\,x)$
for all curve $C \,\not\subset\, X_n^{(2)}$. Then, from Lemma
\ref{lemma} it follows that 
$\varepsilon(L,x) \,=\, \varepsilon(X_n^{(2)},L|_{X_n^{(2)}},x)$, as
required. 

\textbf{Case 2}: $x\,\notin\, \Gamma_n^{(2)}$. 

In this case, $\varepsilon(X_n^{(2)},\,L|_{X_n^{(2)}},\,x)\, =\,
\varepsilon(X_n^{(3)},L|_{X_n^{(3)}},x)$. Now choose the smallest
integer $i$
such that $x \, \in\, \Gamma_n^{(i)}$. 
Note that such an $i$ exists, since $x \in \Gamma_n^{(n)} = X_n^{(n)}$. 
It follows that 
$3 \,\le\, i \,\le\, n$ and $x \,\notin\, \Gamma_n^{(j)}$ for
$j \,\le\, i-1$.

By the induction hypothesis,
$$\varepsilon(X_n^{(2)},\,L|_{X_n^{(2)}},\,x) \,=\,
\varepsilon(X_n^{(3)},\,L|_{X_n^{(3)}},\,x) \,= \,\cdots \,=\,
\varepsilon(X_n^{(i)},\,L|_{X_n^{(i)}},\,x). $$

Note that $L|_{X_n^{(i)}} \,=\, (a_i,\,\cdots,\,a_n)$ by Lemma \ref{resfibre}.
Again, by the induction hypothesis,   
$$\varepsilon(X_n^{(i)},\,L|_{X_n^{(i)}},\,x) \,=\,
\text{min}\{a_i,\,\varepsilon(X_n^{(i+1)},\,L|_{X_n^{(i+1)}},\,x)\},\ \text{~if~}\, i \,\le\,
n-1$$ $$\text{and~}\ \varepsilon(X_n^{(i)},\,L|_{X_n^{(i)}},\,x) \,=\, a_i,\ \text{~if~}\, i\,=\,n.$$ 
In either case, we get that $\varepsilon(X_n^{(i)},\,L|_{X_n^{(i)}},\,x)\, \le\, a_i$.

Hence $\varepsilon(X_n^{(2)},\,L|_{X_n^{(2)}},\,x) \,=\,
\,\cdots\, =
\varepsilon(X_n^{(i)},\,L|_{X_n^{(i)}},\,x) \,\le\, a_i$.

\begin{lemma} Let $C \,\subset\, X_n$ be an irreducible and reduced
curve such that $C \,\not\subset \,X_n^{(2)}$ and $m\,:=\, \text{mult}_x(C) \,>\,
0$. Then $\frac{L\cdot C}{m} \,\ge\, a_i$. 
\end{lemma}
\begin{proof}
Write $C \,=\, p_1 \Gamma_n + p_2\Gamma_n^{(2)}+\ldots + p_n\Gamma_n^{(n)}$ for some non-negative
integers $p_1,\, \cdots ,\, p_n$. 

Since $x \,\in\, \Gamma_n^{(i)} \setminus \Gamma_n^{(i-1)}$, we
have $C \,\not\subset \,D_i'$ by Lemma \ref{containment}. Further, 
$$D_i' \,\equiv\, D_i - c_{1,i} D_1-c_{2,i}D_2 - \ldots - c_{i-1,i}D_{i-1}$$ for
positive integers $c_{1,i}, \,\cdots,\, c_{i-1,i}$. Hence $$0 \,\le\, C \cdot D_i' \,=\, p_i-c_{1,i}p_1
-\ldots - c_{i-1,i}p_{i-1}$$ and this gives $p_i \,\ge\, p_1$.

On the other hand, since $C \,\not\subset\, X_n^{(2)}$, we have that $p_1 \,\ge\,
m$ (see the proof of Proposition \ref{gen-ineq}). So
$$\frac{L\cdot C}{m} \,=\, \frac{a_1p_1+a_2p_2+\ldots+a_np_n}{m} \,\ge\,
a_i.$$ Hence the lemma is proved. 
\end{proof}

Consequently, $\frac{L\cdot C}{m} \,\ge\, a_i \,\ge\,
\varepsilon(X_n^{(2)},\,L|_{X_n^{(2)}},\,x)$
for all curves $C \,\not\subset \,X_n^{(2)}$. Then, by Lemma
\ref{lemma} it follows that 
$\varepsilon(L,\,x)\, =\, \varepsilon(X_n^{(2)},\,L|_{X_n^{(2)}},\,x)$, as required.
This completes the proof of the proposition.
\end{proof}

\begin{proof}[Proof of Theorem \ref{main}]
The theorem follows immediately from Proposition \ref{case1} and Proposition \ref{case2}.
\end{proof}

\begin{remark}
A Bott tower of height two is a Hirzebruch surface (a geometrically 
ruled surface over $\mathbb{P}^1$) and Seshadri constants for ample line
bundles on such surfaces were computed in
\cite[Theorem 3.27]{Sy} and \cite[Theorem 4.1]{Ga}. 
When $n\,=\,2$, Theorem \ref{main} recovers the results of
\cite{Sy,Ga}. 
\end{remark}

\begin{corollary}\label{cor1}
For $n > 0$, let $X_n \to X_{n-1} \to \ldots \to X_2 \to X_1 \to X_0$ be a Bott
tower with positive Bott numbers. Let $L \equiv a_1 D_1 + \ldots + a_n
D_n$ be a nef line bundle
on $X_n$. Let $x \in X_n$. Then the following hold:
\begin{enumerate}
\item $\varepsilon(X_n,\,L,\,x) \,= \,{\rm min}\left\{L \cdot \Gamma_n^{(i)}
\,\mid\, x \in \Gamma_n^{(i)}\right\} = {\rm min}\left\{ a_i \, \mid \, x \in D_{i+1}' \cap \ldots \cap D_n'\right\}$. 
In
particular, $\varepsilon(X_n,\,L,x) \,= \,a_i$ for some $i$. 

\item $\varepsilon(X_n,\,L,x) \,\ge\, {\rm min}\{a_1,\,\cdots,\,a_n\}$. 

\item $\varepsilon(X_n,\,L,x) \,\le\, a_n$. 
\end{enumerate}

\end{corollary}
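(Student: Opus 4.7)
The plan is to establish part (1) by induction on $n$ using Theorem \ref{main}, after which parts (2) and (3) fall out immediately. First note that the two descriptions of $\varepsilon(X_n, L, x)$ in (1) coincide without any work: Proposition \ref{dual-basis} gives $L \cdot \Gamma_n^{(i)} = a_i$, and Remark \ref{alt-char} identifies the condition $x \in \Gamma_n^{(i)}$ with $x \in D_{i+1}' \cap \ldots \cap D_n'$. The set $\{i : x \in \Gamma_n^{(i)}\}$ is nonempty because $\Gamma_n^{(n)} = X_n^{(n)}$ always contains $x$, so the minimum is well defined.

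For the induction, the base case $n = 1$ is immediate from Remark \ref{n=1}: $\Gamma_1 = X_1 = \mathbb{P}^1$ and $\varepsilon(\mathbb{P}^1, \mathcal{O}(a_1), x) = a_1$ by convention. For the inductive step I would apply Theorem \ref{main} and combine it with the induction hypothesis applied to the subtower $X_n^{(2)} \to \cdots \to X_2^{(2)}$, which is a Bott tower of height $n - 1$ with positive Bott numbers by Proposition \ref{vertical-tower}. By Lemma \ref{resfibre}, $L|_{X_n^{(2)}} \equiv (a_2, \ldots, a_n)$, and the $\Gamma$-curves in Figure \ref{diag2} attached to this subtower at $x$ are exactly $\Gamma_n^{(2)}, \ldots, \Gamma_n^{(n)}$, as a direct check from \eqref{curve} shows. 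The induction hypothesis then gives
\[
\varepsilon(X_n^{(2)}, L|_{X_n^{(2)}}, x) \,=\, \min\{a_i \mid x \in \Gamma_n^{(i)},\ 2 \le i \le n\}.
\]
Substituting this into Theorem \ref{main} completes the step: if $x \in \Gamma_n$, the outer minimum with $a_1$ exactly enlarges the index set to $1 \le i \le n$; while if $x \notin \Gamma_n = \Gamma_n^{(1)}$, the index $i = 1$ is already excluded from the right-hand side, so the two descriptions agree.

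Parts (2) and (3) then require no real work. For (2), formula (1) shows that $\varepsilon(X_n, L, x)$ equals some $a_i$, and is therefore at least $\min_j a_j$. For (3), since $x \in \Gamma_n^{(n)} = X_n^{(n)}$, the value $a_n$ always appears in the set minimized over in (1), giving $\varepsilon(X_n, L, x) \le a_n$. The only subtle point in the whole argument is the identification of the subtower's intrinsic $\Gamma$-curves with the $\Gamma_n^{(i)}$ for $i \ge 2$; this is a routine coordinate check using \eqref{curve} together with the isomorphism in the proof of Proposition \ref{vertical-tower}, and I expect no other obstacle.
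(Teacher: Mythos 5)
Your proposal is correct and follows essentially the same route as the paper: induction on $n$ via Theorem \ref{main}, reducing to the subtower $X_n^{(2)}$ with $L|_{X_n^{(2)}} \equiv (a_2,\ldots,a_n)$ and identifying its $\Gamma$-curves with $\Gamma_n^{(2)},\ldots,\Gamma_n^{(n)}$, then deducing (2) and (3) from (1). You merely spell out details (the reindexing of the minimum and the subtower identification) that the paper leaves as ``immediate'' or ``easy to see.''
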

\begin{proof}
The statements in the corollary are derived from Theorem \ref{main}. If $x \,\notin \,\Gamma_n$, then 
$\varepsilon(X,\,L,x) \,=\, \varepsilon(X_n^{(2)},\,L|_{X_n^{(2)}},\,x)$ and the 
statements follow immediately from induction on $n$.

If $x \,\in\, \Gamma_n$, then $\varepsilon(X,\,L,\,x) \,= \,{\rm
min}\{a_1,\,\varepsilon(X_n^{(2)},\,L|_{X_n^{(2)}},\,x)\}$. It is easy to
see that the statements in the corollary hold for $\varepsilon(X_n,\,L,\,x)$ if they hold for 
$\varepsilon(X_n^{(2)},\,L|_{X_n^{(2)}},\,x)$.
\end{proof}

\begin{remark}\label{seshadri-curve}
Let $L$ be any nef line bundle  on a surface $X$ and let $x \in
X$. If $\varepsilon(X,L,x) \,=\, \frac{L\cdot C}{{\rm mult}_{x}C},$ 
then $C$ is said to be a \textit{Seshadri curve} for $L$ at $x$. 
If $X_n$ is a Bott tower and $L$ is a nef line bundle on $X_n$,
then the first statement of Corollary \ref{cor1} shows that
one of the curves 
$\Gamma_n,\, \Gamma_n^{(2)},\,\cdots,\,\Gamma_n^{(n)}$ (defined in \eqref{gni})
is a Seshadri curve for $L$ at any point $x \in X_n$. 
\end{remark}

\begin{corollary}\label{cor2}
For $n > 0$, let $X_n \to X_{n-1} \to \ldots \to X_2 \to X_1 \to X_0$ be a Bott
tower with positive Bott numbers. Let $L \,\equiv\, a_1 D_1 + \ldots + a_n
D_n$ be a nef line bundle
on $X_n$. Then 
\begin{enumerate}
\item $\varepsilon(X_n,\,L) \,=\, {\rm min}\{a_1,\,\cdots,\,a_n\}$, and

\item $\varepsilon(X_n,\,L,1)\, =\, a_n$. 
\end{enumerate}
\end{corollary}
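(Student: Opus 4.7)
The plan is to deduce both statements directly from Corollary \ref{cor1}, particularly the explicit formula
\[
\varepsilon(X_n, L, x) = \min\bigl\{a_i \;\bigm|\; x \in D_{i+1}' \cap \ldots \cap D_n'\bigr\}
\]
from part (1), together with the universal bounds $\min\{a_1,\ldots,a_n\} \leq \varepsilon(X_n, L, x) \leq a_n$ from parts (2)--(3). Once the formula is in hand, everything reduces to exhibiting two specific points: one at which the Seshadri constant is as small as possible, and one at which it is as large as possible.

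For the infimum statement, the bound $\varepsilon(X_n, L) \geq \min\{a_1,\ldots,a_n\}$ is immediate from Corollary \ref{cor1}(2) applied pointwise. To see that the infimum is attained, I will take a torus-fixed point of the form $x_0 = [z_1^0:w_1^0:0:1:0:1:\ldots:0:1]$. By \eqref{ref_div}, this $x_0$ lies in $D_j' = \mathbb{V}(z_j)$ for every $j \geq 2$, so $x_0 \in D_{i+1}' \cap \ldots \cap D_n'$ holds for every $1 \leq i \leq n$. Consequently all of $a_1, \ldots, a_n$ appear in the set on the right-hand side of the displayed formula, and hence $\varepsilon(X_n, L, x_0) = \min\{a_1,\ldots,a_n\}$, which forces equality in the infimum.

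For the supremum statement, the bound $\varepsilon(X_n, L, 1) \leq a_n$ is immediate from Corollary \ref{cor1}(3). To see this value is attained, I will pick any point $x_1 = [z_1:w_1:\ldots:z_n:w_n]$ with $z_j \neq 0$ for every $2 \leq j \leq n$. Then $x_1 \notin D_j'$ for any such $j$, so the only index $i$ for which $x_1 \in D_{i+1}' \cap \ldots \cap D_n'$ is $i = n$, where the intersection is empty and the condition is vacuous. The displayed set therefore reduces to $\{a_n\}$, giving $\varepsilon(X_n, L, x_1) = a_n$ and hence $\varepsilon(X_n, L, 1) \geq a_n$.

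There is no substantial obstacle here; the two parts are essentially dual, choosing a maximally special point for (1) and a sufficiently generic point for (2). The only routine verification is that the chosen coordinates satisfy the claimed incidence conditions with the invariant divisors $D_j'$, which follows immediately from the description $D_j' = \mathbb{V}(z_j)$ in \eqref{ref_div}.
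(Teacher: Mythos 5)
Your proposal is correct and follows essentially the same route as the paper: both deduce the corollary from Corollary \ref{cor1} (equivalently, iterated applications of Theorem \ref{main}) by exhibiting a point where the extreme value is attained, the only cosmetic difference being that you take a point on $\Gamma_n$ (so that every $a_i$ enters the minimum) whereas the paper takes a point of $\Gamma_n^{(i)}\setminus\Gamma_n^{(i-1)}$ for the minimal $a_i$. One trivial slip: your $x_0=[z_1^0:w_1^0:0:1:\ldots:0:1]$ need not be torus-fixed unless $z_1^0w_1^0=0$, but only the incidences $x_0\in D_j'$ for $j\ge 2$ are used, so the argument is unaffected.
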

\begin{proof}
We first prove (1). Let $a_i \,=\, {\rm min}\{a_1,\,\cdots,\,a_n\}$. 
It is easy to choose a point $x \,\in\, X_n$ such that 
$x \,\in\, \Gamma_n^{(i)} \setminus \Gamma_n^{(i-1)}$; for example, we can
take \(x=[z^0_1:w^0_1: \ldots: z^0_n:w^0_n]\) such that \(z^0_i\neq
0\) and \(z^0_l=0, w^0_l=1\) for \(l>i\). 
Then $x
\notin \Gamma_n^{(j)}$ for any $j \le i-1$. By Theorem \ref{main}, 
$$\varepsilon(L,\,x) \,=\, \varepsilon(L|_{X_n^{(2)}},\,x)\, =\, \ldots\, =\, 
\varepsilon(L|_{X_n^{(i)}},\,x)\, =\, \text{min}\{a_i,\,
\varepsilon(L|_{X_n^{(i+1)}},\,x)\}\,=\, a_i\, .$$ For the last equality, use
Corollary \ref{cor1}(1) and $a_i \,= \,{\rm min}\{a_1,\,\cdots,\,a_n\}$. 
Now it follows (again, using Corollary \ref{cor1}(1)) that the 
smallest Seshadri constant of $L$ is $a_i$. Hence
$\varepsilon(X_n,\,L)\,= \,a_i$. 

To prove (2), choose a point $x \in X_n$ satisfying $x \notin \Gamma_n^{(n-1)}$. Then 
$$\varepsilon(L,\,x) \,=\, \varepsilon(L|_{X_n^{(2)}},\,x) \,=\, \ldots\, =\, 
\varepsilon(L|_{X_n^{(n)}},\,x)
\,=\,\varepsilon(\mathbb{P}^1,\,\mathcal{O}_{\mathbb{P}^1}(a_n),\,x) \,=\,a_n\, .$$
Since all Seshadri constants are bounded above by $a_n$ and the value
$a_n$ is achieved at some point, it follows that $\varepsilon(L,1) \,=\, a_n$.
\end{proof}

\begin{remark}
From the proof of Corollary \ref{cor2}, we observe that 
\(\varepsilon(L,1) = \varepsilon(L,x)=a_n\) for any \(x \notin
\Gamma_n^{(n-1)}\). It is easy to see that for a general point $x \in
X_n$, we have $x \notin \Gamma_n^{(n-1)}$. 
Thus $\varepsilon(L,1)$ is achieved at general
points of $X_n$. On the other hand, $\varepsilon(X,L)$ is achieved at
\textit{special} points of $X_n$; namely, points $x$ satisfying $\Gamma_n^{(i)} \setminus
\Gamma_n^{(i-1)}$ if $a_i = {\rm min}\{a_1,\ldots,a_n\}$. 
\end{remark}

\subsection{Remarks and Examples}\label{examples}
Now we will compare our results with some other results on Seshadri
constants on toric varieties in literature. We will then give some examples to
illustrate our results. 

\begin{remark}\label{hmp} Seshadri constants for line bundles on toric
  varieties at torus fixed points are investigated in \cite{DiRocco} 
via generation of  jets. 
The case of equivariant vector
  bundles on toric varieties is studied in \cite{HMP}. At a torus fixed
  point $x$, the Seshadri constant of an equivariant vector bundle $E$
  is computed via the restriction of $E$ to the invariant curves passing through $x$
  (see \cite[Proposition 3.2]{HMP}). We show now that Corollary
\ref{cor1}(1) recovers this result for line bundles. So for line bundles, 
Corollary \ref{cor1}(1) can be viewed
as a generalization of these results for all points on a Bott tower. 
	
	Recall that there is one-one correspondence between the set of
        torus-fixed points in a smooth complete toric variety 
and the set of maximal cones. Let us denote the fixed point
corresponding to a maximal cone \(\sigma\) by \(x_{\sigma}\). 
Let $L \equiv a_1 D_1 + \ldots + a_n D_n$ be a nef line bundle
	on $X_n$. By \cite[Corollary 3.3]{HMP}, 
        $\varepsilon(L) = \min\limits_{x_{\sigma} } \varepsilon(L,
        x_{\sigma}),$ 
where the minimum varies over all maximal cones $\sigma $ in
$\Delta_n$. Now consider the maximal cone 
$\sigma=\text{Cone}(v_1, \ldots, v_n)$. Then by \cite[Corollary
4.2.2]{B}, we have \(\varepsilon(L, x_{\sigma})={\rm
  min}\{a_1,\ldots,a_n\}\), since the invariant curves passing through
\(x_{\sigma}\) are \(V(\tau_i)\), where $\tau_i=\text{Cone}(v_1,
\ldots, \widehat{v}_i, \ldots, v_n)$ for \(i=1, \ldots, n\) (see also  \cite[Proposition
3.2]{HMP}). Now
consider a 
maximal cone $\sigma'$ other that $\sigma$. Let \(C\) be an invariant
curve passing through \(x_{\sigma'}\). 
Then \(C=V(\tau)\) for an \((n-1)\)-dimensional cone $\tau$ of the form
	\begin{center}
		$\tau=\text{Cone}(v_1, \ldots, \widehat{v}_{i_\textsubscript{1}}, \ldots, \widehat{v}_{i_\textsubscript{\(r\)}}, \ldots, v_n,v_{n+i_\textsubscript{1}}, v_{n+i_\textsubscript{\(2\)}}, \ldots, \widehat{v}_{n+i_\textsubscript{\(j\)}}, \ldots, v_{n+i_\textsubscript{\(r\)}})$ 
	\end{center}
	for some $j=1, \ldots, r$, where \(r\) varies from \(1\) to
        \(n\) and $D \cdot V(\tau) \geq {\rm min}\{a_1,\ldots,a_n\}$
  (see \cite[proof of Theorem 3.1.1]{KD}). Thus 
  \(\varepsilon(L, x_{\sigma'}) \geq {\rm
    min}\{a_1,\ldots,a_n\}\). Hence 	
$\varepsilon(L) = {\rm min}\{a_1,\ldots,a_n\}$, which agrees with Corollary \ref{cor2} (1).
\end{remark}

\begin{remark}\label{non-integral}
Theorem \ref{main} shows that the Seshadri constants of ample line
bundles on Bott towers are integers at all points. By \cite[Corollary 4.2.2]{B}, this holds for
torus fixed points on an arbitrary toric variety. Note however that
Seshadri constants can be non-integral on an arbitrary toric
variety; see \cite[Example 1.4]{It2}. This example describes an ample
line bundle $L$ on a toric surface $X$ such that $L^2 = 3$ and $3/2
\le \varepsilon(X,L,x) \le \sqrt{3}$ for some point $x \in X$. In fact, in this example $X$ is a cubic surface in $\mathbb{P}^3$ and $L = \mathcal{O}_X(1)$. Let $x \in X$ be a general point. By considering a hyperplane $H \subset \mathbb{P}^3$ 
tangent to $X$ at $x$ and taking $ C = H \cap X$, we obtain an equality 
$\varepsilon(X,L,x) = 3/2$. 
See also \cite[Example 2.1]{ST}). 
\end{remark}

\begin{remark}\label{ito} 
In \cite{It2}, the author gives bounds on Seshadri constants on an
arbitrary toric variety at any point. In some cases, these bounds give
exact values. 
To apply this in our situation, let 
$L \equiv a_1 D_1 + \ldots + a_n D_n$ be an ample line bundle
	on $X_n$. Let \(x \in T \), the torus of \(X_n\). By a repeated
application of \cite[Theorem 3.6]{It2}, it is possible to show that 
\(\varepsilon(L,x)=a_n\). This is a special case of our results; for
example, it follows from Corollary \ref{cor1}, since  clearly 
\(x \notin \Gamma_n^{(n-1)}\). 
\end{remark}

We now give some examples illustrating our main theorem. We use the
same set-up as in Theorem \ref{main}. Note that in each example below
Corollary \ref{cor2} is verified. 

\begin{example} Let $L \equiv (1,3,8,4) \in \text{Pic}(X_4)$ and \(x
  \in X_4\). We repeatedly use Theorem \ref{main} and 
Corollary \ref{cor1} to compute the Seshadri constants of $L$. If \(x \in
\Gamma_4\) then $\varepsilon(L,x) = 1$. So 
assume now that \(x \notin \Gamma_4\). Then $\varepsilon(L,x) =
\varepsilon(L|_{X^{(2)}_4}, x)$. Note that 
\(L|_{X^{(2)}_4} \equiv (3,8,4)\). If \( x \in \Gamma_4^{(2)}\) then
$\varepsilon(L,x) = 3$. Finally, 
if \(x \notin \Gamma_4^{(2)}\), then $\varepsilon(L,x) =\varepsilon(L|_{X^{(3)}_4}, x)=4$.
	
	Thus   \begin{equation*}
		\varepsilon(L,x) = 
		\begin{cases}
			1, & {\rm if}\ x \in \Gamma_4, \\
			3,  & {\rm if}\ x \notin \Gamma_4, x \in \Gamma_4^{(2)},\\
			4, & {\rm if}\ x \notin \Gamma_4, x \notin \Gamma_4^{(2)}.
		\end{cases}
	\end{equation*} 
\end{example}

\begin{example} Let $L \equiv (1,2,3,8) \in \text{Pic}(X_4)$ and \(x
  \in X_4\). Repeatedly applying Theorem \ref{main} and Corollary
  \ref{cor1}, 
	
	   \begin{equation*}
		\varepsilon(L,x) = 
		\begin{cases}
			1, & {\rm if}\ x \in \Gamma_4, \\
			2,  & {\rm if}\ x \notin \Gamma_4, x \in \Gamma_4^{(2)},\\
			3, & {\rm if}\ x \notin \Gamma_4, x \notin \Gamma_4^{(2)}, x \in \Gamma_4^{(3)},\\
			8, &{\rm if}\ x \notin \Gamma_4, x \notin \Gamma_4^{(2)}, x \notin \Gamma_4^{(3)}.
		\end{cases}
	\end{equation*} 
\end{example}

\begin{example} Let $L \equiv (3,6,2,7) \in \text{Pic}(X_4)$ and \(x \in X_4\). Here note that \(x \in \Gamma_4 \Rightarrow x \in \Gamma_4^{(2)} \Rightarrow x \in \Gamma_4^{(3)}\).
	
	Then   \begin{equation*}
		\varepsilon(L,x) = 
		\begin{cases}
			2,  & {\rm if}\ x \in \Gamma_4, \\
			2, & {\rm if}\ x \notin \Gamma_4,  x \in \Gamma_4^{(3)},\\
			7, &{\rm if}\ x \notin \Gamma_4,  x \notin \Gamma_4^{(3)}.
		\end{cases}
	\end{equation*} 
\end{example}

\begin{example} Let $L \equiv (3,6,5,7,9) \in \text{Pic}(X_5)$ and \(x \in X_5\). Here note that \(x \in \Gamma_5 \Rightarrow x \in \Gamma_5^{(2)} \Rightarrow x \in \Gamma_5^{(3)} \Rightarrow x \in \Gamma_5^{(4)}\).
	
	Then   \begin{equation*}
		\varepsilon(L,x) = 
		\begin{cases}
			3,  & {\rm if}\ x \in \Gamma_5, \\
                        5, & {\rm if}\ x \notin \Gamma_5,  x \in \Gamma_5^{(2)},\\
			5, & {\rm if}\ x \notin \Gamma_5,  x \notin
                        \Gamma_5^{(2)}, x \in \Gamma_5^{(3)}, \\
			7, & {\rm if}\ x \notin \Gamma_5,  x \notin
                        \Gamma_5^{(2)}, x \notin \Gamma_5^{(3)}, x \in
                        \Gamma_5^{(4)}, \\
                        9, & {\rm if}\ x \notin \Gamma_5,  x \notin
                        \Gamma_5^{(2)}, x \notin \Gamma_5^{(3)}, x \notin
                        \Gamma_5^{(4)}.
		\end{cases}
	\end{equation*} 
\end{example}



\begin{thebibliography}{ZZZZZ}

\bibitem[Ba]{Ba} T. Bauer, Seshadri constants and periods of polarized
abelian varieties, {\it Math. Ann.} {\bf 312} (1998), no.~4, 607--623. 

\bibitem[BDHKKSS]{B} T. Bauer, S. Di Rocco, B. Harbourne, M. Kapustka, A. Knutsen, W. 
Syzdek and T. Szemberg, A primer on Seshadri constants, {\it Interactions of 
classical and numerical algebraic geometry}, 33--70, Contemp. Math., 496, Amer. Math. 
Soc., Providence, RI, 2009.

\bibitem[BS]{BS} T. Bauer\ and\ T. Szemberg, Seshadri constants and
the generation of jets, {\it J. Pure Appl. Algebra} {\bf 213} (2009), 2134--2140. 

\bibitem[BHNN]{BHNN} I. Biswas, K. Hanumanthu, D. S. Nagaraj \and\  P. E. Newstead,
Seshadri constants and Grassmann bundles over curves, to appear in {\it Ann. Inst. Fourier}.

\bibitem[BP]{BP}
V. M. Buchstaber\ and\ T. E. Panov, {\it Toric topology}, Mathematical Surveys and
Monographs, 204, American Mathematical Society, Providence, RI, 2015.

\bibitem[CLS]{Cox}
D. A. Cox, J. B. Little\ and\ H. K. Schenck, {\it Toric varieties}, Graduate Studies in
Mathematics, 124, American Mathematical Society, Providence, RI, 2011. 

\bibitem[Deb]{Deb} O. Debarre, Seshadri constants of abelian
varieties, {\it The Fano Conference}, 379--394, Univ. Torino, Turin,
2004.

\bibitem[Dem]{D} J.-P. Demailly, Singular Hermitian
metrics on positive line bundles, {\it Complex algebraic varieties}
(Bayreuth, 1990), 87--104, Lecture Notes in Math., 1507, Springer, Berlin, 1992. 

\bibitem[DiR]{DiRocco}
S. Di Rocco, Generation of $k$-jets on toric varieties, {\it Math. Zeit.} {\bf 231} (1999), 169--188. 

\bibitem[Ga]{Ga} L. F. Garc\'{\i}a, Seshadri constants on ruled surfaces: the rational and the
elliptic cases, {\it Manuscripta Math.} {\bf 119} (2006), 483--505

\bibitem[GK]{GK}
M. Grossberg\ and\ Y. Karshon, Bott towers, complete integrability, and the extended character
of representations, {\it Duke Math. Jour.} {\bf 76} (1994), 23--58.

\bibitem[HM]{HM} 
K. Hanumanthu\ and\ A. Mukhopadhyay, Multi-point Seshadri constants on ruled surfaces, {\it Proc.
Amer. Math. Soc.} {\bf 145} (2017), 5145--5155. 

\bibitem[Har]{Har} R. Hartshorne, {\it Ample subvarieties of algebraic varieties}, Lecture Notes in Mathematics, Vol. 156, Springer-Verlag, Berlin, 1970.


\bibitem[HMP]{HMP} 
M. Hering, M. Musta\c{t}\u{a}\ and\ S. Payne, 
Positivity properties of toric vector bundles, {\it Ann. Inst. Fourier} {\bf 60} (2010), 607--640. 

\bibitem[It1]{It1} 
A. Ito, Okounkov bodies and Seshadri constants, {\it Adv. Math.} {\bf 241} (2013), 246--262.

\bibitem[It2]{It2} A. Ito, Seshadri constants via toric degenerations, {\it Jour. Reine Angew.
Math.} {\bf 695} (2014), 151--174. 

\bibitem[KD]{KD} B. Khan\ and\ J. Dasgupta, Toric vector bundles on
Bott tower, {\it Bull. Sci. Math.} {\bf 155} (2019), 74--91. 

\bibitem[La]{La} R. Lazarsfeld, Lengths of periods and Seshadri constants of abelian
varieties, {\it Math. Res. Lett.} {\bf 3} (1996), 439--447. 

\bibitem[LZ]{LZ} Y. Liu\ and\ Z. Zhuang, Characterization of
projective spaces by Seshadri constants,  {\it Math. Zeit.} {\bf 289} (2018), 25--38. 

\bibitem[Na]{Na} M. Nakamaye, Seshadri constants on abelian varieties,
{\it Amer. Jour. Math.} {\bf 118} (1996), 621--635. 

\bibitem[ST]{ST}T. Szemberg\ and\ H. Tutaj-Gasi\'{n}ska, Seshadri fibrations on algebraic surfaces, {\it Ann. Acad. Pedagog. Crac. Stud. Math.} {\bf 4} (2004), 225--229.

\bibitem[Sy]{Sy} W. Syzdek, {\it Seshadri constants
and geometry of surfaces}, Ph.D thesis, 2005, University of Duisburg-Essen. 

\end{thebibliography}
\end{document}